\numberwithin{equation}{section} 
    \def\qed{\hfill$\sqcap\kern-8.0pt\hbox{$\sqcup$}$\\}
    \def\beq{\begin{eqnarray}}
    \def\eeq{\end{eqnarray}}
    \def\beqq{\begin{eqnarray*}}
    \def\eeqq{\end{eqnarray*}}
    \def\p{{\mathbb P}}
    \def\e{{\mathbb E}}
    \def\r{{\mathbb R}}
    \def\d{{\textnormal d}}
    \def\ind{{\mathbb I}}
    	\def\edist{\,{\buildrel d \over =}\ }
    	\def\var{\textnormal{Var}}
    	\def\cov{\textnormal{Cov}}
    	\def\rz{\mathbb{R}\backslash\{0\}}
    	\def\rdz{\mathbb{R}^d\backslash\{\mathbf{0}\}}
    	\def\bfx{\mathbf{x}}
	\newtheorem{theorem}{Theorem}
	\newtheorem{lemma}{Lemma}
	\newtheorem{proposition}{Proposition}
	\newtheorem{corollary}{Corollary}
	\theoremstyle{definition}
	\newtheorem{remark}{Remark}
\title{Karhunen--Lo\`{e}ve expansions of L\'{e}vy processes}
\author{
{Daniel Hackmann
\footnote{Institute of Financial Mathematics and Applied Number Theory, Johannes Kepler University,
Linz, Austria.  
E-mail: daniel.hackmann@jku.at}}
 }
 \date{\today}
\begin{document}

\maketitle

\begin{abstract}
\noindent Karhunen--Lo{\`e}ve expansions (KLE) of stochastic processes are important tools in mathematics, the sciences, economics, and engineering. However, the KLE is primarily useful for those processes for which we can identify the necessary components, i.e., a set of basis functions, and the distribution of an associated set of stochastic coefficients. Our ability to derive these components explicitly is limited to a handful processes. In this paper we derive all the necessary elements to implement the KLE for a square-integrable L\'evy process. We show that the eigenfunctions are sine functions, identical to those found in the expansion of a Wiener process. Further, we show that stochastic coefficients have a jointly infinitely divisible distribution, and we derive the generating triple of the first $d$ coefficients. We also show, that, in contrast to the case of the Wiener process, the coefficients are not independent unless the process has no jumps. Despite this, we develop a series representation of the coefficients which allows for simulation of any process with a strictly positive L\'evy density. We implement our theoretical results by simulating the KLE of a variance gamma process.  
\end{abstract}

\section{Introduction}
Fourier series are powerful tools in mathematics and many other fields. The Karhunen-Lo{\`e}ve theorem (KLT) allows us to create generalized Fourier series from stochastic processes in an, in some sense, optimal way. Arguably the most famous application of the KLT is to derive the classic sine series expansion of a Wiener process $W$ on $[0,1]$. Specifically, 
\begin{align}\label{eq:wklt}
W_t = \sqrt{2}\sum_{k \geq 1}Z_k \frac{\sin\left(\pi(k-\frac{1}{2}t)\right)}{\pi\left(k - \frac{1}{2}\right)}
\end{align}
where convergence of the series is in $L^2(\Omega,\p)$ and uniform in $t \in [0,1]$, and the $\{Z_k\}_{k\geq 1}$ are i.i.d. standard normal random variables. The main result of this paper is to show that a square integrable L\'evy process admits a similar representation as a series of sine functions; the key difference is that the stochastic coefficients  are no longer normal nor independent.\\ \\
The KLT applies much more generally and is thus an important tool in many fields. For example, we see applications of the KLT and Principal Component Analysis, its discrete time counterpart, in physics and engineering \cite{Ghanesh,Phoon}, \cite[Chapter 10]{deep}, in signal and image processing \cite{unser1998wavelets}, \cite[Chapter 1]{sigbook}, in finance and economics \cite{Benko,cont2002dynamics,gun1} and other areas. For interesting recent theses on the KLT from three different points of view see also \cite{shijin} (probability and time series), \cite[Chapter 7]{Luo} (stochastic partial differential equations), and \cite{wang} (statistics).\\ \\
Deriving the Karhunen--L{\`o}eve expansion (KLE) of the type \eqref{eq:wklt} for a square integrable stochastic process $X$ on $[a,b]$ requires two steps: first, one must solve a Fredholm integral equation to obtain the basis functions $\{e_k\}_{k\geq1}$ (c.f. the sine functions in Equation \ref{eq:wklt}). Second, one must identify the distribution of the stochastic coefficients 
\begin{align}\label{eq:stocoef}
Z_k := \int_a^bX_te_k(t)\d t,\quad k\in \mathbb{N}.
\end{align}
In general, obtaining both the basis functions and the distribution of the stochastic coefficients is not an easy task, and we have full knowledge in only a few specific cases. Besides the Wiener process, the Brownian Bridge process, the Anderson--Darling process, and spherical fractional Brownian Motion (see \cite{Istas20061578} for the latter) are some examples. For further examples with derivation see \cite[Chapter 1]{shijin}. Non-Gaussian processes pose an additional challenge and the problem of deriving the KLE is usually left to numerical means (see e.g., \cite{Phoon}). \\ \\
In this paper we derive all the elements of the KLE for a square integrable L\'{e}vy process on the interval $[0,T]$. The result is timely since in many of the fields mentioned above, especially in finance, but recently also in the area of image/signal processing (see e.g., \cite{bouya}), L\'{e}vy models are becoming increasingly popular. In Section \ref{sec:klt} we show that the basis functions are sine functions, identical to those in \eqref{eq:wklt}, and that the first $d$ stochastic coefficients are jointly distributed like an infinitely divisible (ID) random vector. We identify the generating triple of this vector from which it follows that the coefficients are independent only when the process has no jumps, i.e., when the process is a scaled Wiener process with drift. Although simulating dependent multivariate random variables from a characteristic function is generally difficult, in Section \ref{sec:shotnoise} we derive a shot-noise (series) representation for
\begin{align}\label{eq:vecstoco}
Z^{(d)} := (Z_1,Z_2,\ldots,Z_d)^{\text{\textbf{T}}},\quad d \in \mathbb{N},
\end{align}
for those processes which admit a strictly positive L\'{e}vy density. This result, in theory, allows us to simulate the truncated KLE for a large class of L\'{e}vy models. We conclude by generating some paths of a $d$-term KLE approximation of a variance gamma process.\\ \\
To begin, we recall the necessary facts from the theory of L\'{e}vy processes and ID random vectors.

\section{Facts from the theory of L\'{e}vy processes}\label{sec:introlev}
The L\'{e}vy-Khintchine theorem states that every $d$-dimensional ID random vector $\xi$ has a Fourier transform of the form
\begin{align*}
\e[e^{\i\langle \mathbf{z},\xi\rangle}] = e^{-\Psi(\mathbf{z})},\quad \mathbf{z} \in \r^d,
\end{align*}
where
\begin{align}\label{eq:char}
\Psi(\mathbf{z}) = \frac{1}{2}\mathbf{z}^{\textbf{T}}Q\mathbf{z}  - \i\langle \mathbf{a}, \mathbf{z} \rangle - \int_{\r^d\backslash\{\mathbf{0}\}}e^{\i\langle \mathbf{z}, \mathbf{x} \rangle} - 1 - \i\langle \mathbf{z},\mathbf{x} \rangle h(\mathbf{x})\nu(\d \mathbf{x}),
\end{align}
and where $\mathbf{a} \in \r^d$,  $Q$ is a positive semi-definite matrix, and $\nu(\d \mathbf{x})$ is a measure on $\r^d\backslash\{\mathbf{0}\}$ satisfying
\begin{align}\label{eq:int_cond}
\int_{\r^d\backslash\{\mathbf{0}\}}\min(1,\vert \mathbf{x} \vert^2)\nu(\d  \mathbf{x} ) < \infty.
\end{align}
The function $h$ is known as the cut-off function; in general, we need such a function to ensure convergence of the integral. An important fact is that up to a choice of $h$, the generating triple $(\mathbf{a},Q,\nu)$ uniquely identifies the distribution of $\xi$. The L\'{e}vy-Khintchine theorem for L\'{e}vy processes gives us an analogously powerful result, specifically, for any $d$-dimensional L\'{e}vy process $X$ we have
 \begin{align*}
\e[e^{i\langle \mathbf{z},X_t\rangle}] = e^{-t\Psi(\mathbf{z})},\quad \mathbf{z} \in \r^d,\,t \geq 0,
\end{align*}
where $\Psi$ is as in \eqref{eq:char} and $X$ is uniquely determined, up to identity in distribution, by the triple $(\mathbf{a},Q,\nu)$. Following convention, we will refer to the function $\Psi$ as the characteristic exponent of $\xi$ (resp. $X$) and will write $\Psi_{\xi}$ (resp. $\Psi_{X}$) if there is the potential for ambiguity. In one dimension we will write $(a,\sigma^2,\nu)$ for the generating triple; the measure $\nu$ will always be referred to as the L\'{e}vy measure. When $\nu(\d x) = \pi(x)\d x$ for some density function $\pi$, we will write $(a,\sigma^2,\pi)$ and refer to $\pi$ as the L\'{e}vy density. If we wish to be specific regarding the cut-off function we will write $(\mathbf{a},Q,\nu)_{h\equiv \cdot}$ or $(a,\sigma^2,\nu)_{h\equiv\cdot}$ for the generating triples. \\ \\
\noindent In this article we will work primarily with one dimensional L\'{e}vy processes having zero mean and finite second moment; by this we mean that $\e[X_t] = 0$ and $\e[X_t^2] < \infty$ for every $t \geq 0$. We will denote the set of all such L\'{e}vy processes by $\mathcal{K}$. One may show that the later condition implies that $\Psi$ is twice differentiable. Thus, when we work with a process $X \in \mathcal{K}$, we can express the variance of $X_t$ as
\begin{align*}
\var(X_t) = \e[X_t^{2}] = \Psi''(0)t,
\end{align*}
and the covariance of $X_t$ and $X_s$ as
\begin{align*}
\cov(X_s,X_t) = \e[X_sX_t] = \Psi''(0)\min(s,t).
\end{align*}
For notational convenience we will set $\alpha := \Psi''(0)$. \\ \\
\noindent The existence of moments for both L\'{e}vy processes and ID random vectors can be equivalently expressed in terms of the L\'{e}vy measure. An ID random vector $\xi$ or L\'{e}vy process $X$ with associated L\'{e}vy measure $\nu$ has a finite second moment (meaning the component-wise moments) if, and only if,
\begin{align}\label{eq:conditionA}
\int_{\vert \mathbf{x}\vert > 1} \vert \mathbf{x}\vert^2\nu(\d \mathbf{x}) < \infty \tag{Condition A}.
\end{align}
We will denote the class of ID random vectors with zero first moment and finite second moment by $\mathcal{C}$. The subset of $\mathcal{C}$ which also satisfies
\begin{align}\label{eq:conditionB}
\int_{\vert \mathbf{x}\vert \leq 1} \vert \mathbf{x}\vert\nu(\d \mathbf{x}) < \infty. \tag{Condition B}
\end{align}
will be denoted $\mathcal{CB}$ and $\mathcal{KB}$ will denote the analogous subset of $\mathcal{K}$. We remark that any $\xi \in \mathcal{C}$ (resp. $X \in \mathcal{K}$) necessarily has a representation of the form $(\mathbf{0},Q,\nu)_{h\equiv 1}$ (resp. $(0,\sigma^2,\nu)_{h\equiv 1}$). Additionally, any $d$-dimensional $\xi \in \mathcal{CB}$ necessarily has representation $(\mathbf{a},Q,\nu)_{h\equiv 0}$ where $\mathbf{a}$ has entries
\begin{align*}
-\int_{\rdz}P_k(\mathbf{x})\nu(\d \bfx),\quad k \in \{1,2,\ldots d\}
\end{align*}
and $P_k$ is the projection onto the $k$-th component. Analogously, if $X \in \mathcal{KB}$ then we have representation $(a,\sigma^2,\nu)_{h\equiv 0}$ where $a = -\int_{\rz}x\nu(\d x)$.
\section{The Karhunen--Lo\`{e}ve theorem}\label{sec:klt}
Given a real valued continuous time stochastic process $X$ defined on an interval $[a,b]$ and an orthonormal basis $\{\phi_k\}_{k \geq 1}$ for $L^2([a,b])$ we might try to express $X$ as a generalized Fourier series
\begin{align}\label{eq:eigen}
X_t = \sum_{k=1}^\infty Y_k\phi_k(t),\quad\text{ where }\quad Y_k := \int_a^bX_t\phi_k(t)\d t.
\end{align}
In this section, our chosen basis will be derived from the eigenfunctions corresponding to the non-zero eigenvalues $\{\lambda_k\}_{k \geq 1}$ of the integral operator $K:L^{2}([a,b]) \rightarrow L^{2}([a,b])$,
\begin{align*}
(Kf)(s):=\int_a^b\cov(X_{s},X_t)f(t)\d t.
\end{align*} 
When the covariance satisfies a continuity condition it is known  (see for example \cite{Ghanesh} Section 2.3.3) that the normalized set of eigenfunctions $\{e_k\}_{k \geq 1}$ of $K$ is countable and forms a basis for $L^{2}([a,b])$. When we choose this basis in \eqref{eq:eigen} we adopt the special notation $\{Z_k\}_{d \geq 1}$ for the stochastic coefficients. In this case, the expansion is optimal in a number of ways. Specifically, we have:
\begin{theorem}[The Karhunen-Lo\`{e}ve Theorem]\label{theo:klt}
Let $X$ be a real valued continuous time stochastic process on $[a,b]$ such that $0 \leq a \leq b < \infty$ and let $\e[X_t] = 0$ and $\e[X^2_t] < \infty$ for each $t \in [a,b]$. Further, suppose $\cov(X_s,X_t)$ is continuous on $[a,b]\times[a,b]$. 
\begin{enumerate}[(i)]
\item Then,
\begin{align*}
\e\left[\left( X_t - \sum_{k=1}^{d}Z_ke_k(t)\right)^2\right] \rightarrow 0,\quad\text{ as }\quad d \rightarrow \infty
\end{align*}
uniformly for $t \in [a,b]$. Additionally, the random variables $\{Z_k\}_{k \geq 1}$ are uncorrelated and satisfy $\e[Z_k] = 0$ and $\e[Z_k^2] = \lambda_k$.
\item For any other basis $\{\phi_k\}_{k \geq 1}$ of $L^2([a,b])$, with corresponding stochastic coefficients $\{Y_k\}_{k \geq 1}$, and any $d \in \mathbb{N}$, we have
\begin{align*}
\int_{a}^{b}\e\left[\left(\varepsilon_d(t)\right)^2\right]\d t \leq \int_{a}^{b}\e\left[\left(\tilde\varepsilon_d(t)\right)^2\right]\d t,
\end{align*}
where $\varepsilon_d$ and $\tilde\varepsilon_d$ are the remainders $\varepsilon_d(t) :=\sum_{d+1}^{\infty}Z_ke_k(t)$ and $\tilde\varepsilon_d(t) :=\sum_{d+1}^{\infty}Y_k\phi_k(t)$.
\end{enumerate}
\end{theorem}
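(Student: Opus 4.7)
The plan is to treat $K$ as a compact, self-adjoint, positive semi-definite operator on $L^2([a,b])$ (the kernel $\cov(X_s,X_t)$ is continuous on a compact square, and positive semi-definiteness follows from $\sum_{i,j}c_ic_j\cov(X_{t_i},X_{t_j}) = \var(\sum_i c_i X_{t_i}) \geq 0$). The spectral theorem then supplies the nonnegative eigenvalues $\{\lambda_k\}_{k\geq 1}$ and the orthonormal eigenfunctions $\{e_k\}_{k\geq 1}$ spanning the range of $K$, which together with a basis for the kernel of $K$ give the promised basis of $L^2([a,b])$. The two workhorses for the rest of the proof will be Mercer's theorem, which upgrades the pointwise spectral expansion
\begin{align*}
\cov(X_s,X_t) = \sum_{k\geq 1}\lambda_k e_k(s) e_k(t)
\end{align*}
to uniform convergence on $[a,b]\times[a,b]$, and the fact that $K$ is trace-class with $\sum_k \lambda_k = \int_a^b \cov(X_t,X_t)\d t < \infty$.

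For part (i), linearity yields $\e[Z_k]=0$ immediately, and Fubini together with the eigenfunction equation gives
\begin{align*}
\e[Z_j Z_k] = \int_a^b\!\!\int_a^b \cov(X_s,X_t) e_j(s)e_k(t)\,\d s\,\d t = \lambda_k\int_a^b e_j(t)e_k(t)\,\d t = \lambda_k \delta_{jk},
\end{align*}
so the $\{Z_k\}$ are uncorrelated with $\e[Z_k^2]=\lambda_k$. For the mean-square convergence I would expand the square and use $\e[X_t Z_k] = \int_a^b \cov(X_t,X_s)e_k(s)\d s = \lambda_k e_k(t)$ to reduce the error to
\begin{align*}
\e\!\left[\Bigl(X_t - \sum_{k=1}^{d}Z_k e_k(t)\Bigr)^{\!2}\right] = \cov(X_t,X_t) - \sum_{k=1}^{d}\lambda_k e_k(t)^2,
\end{align*}
whose uniform convergence to $0$ on $[a,b]$ is precisely the content of Mercer's theorem applied on the diagonal.

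For part (ii), given any orthonormal basis $\{\phi_k\}_{k\geq 1}$ with coefficients $Y_k$, Parseval for $X_t$ in $L^2([a,b])$ (almost surely, after checking $X\in L^2([a,b])$ a.s.\ via Fubini from $\int_a^b\var(X_t)\d t<\infty$) plus Fubini give
\begin{align*}
\int_a^b \e\bigl[\tilde\varepsilon_d(t)^2\bigr]\d t = \sum_{k>d}\e[Y_k^2] = \sum_{k>d}\langle K\phi_k,\phi_k\rangle = \operatorname{tr}(K) - \sum_{k=1}^{d}\langle K\phi_k,\phi_k\rangle,
\end{align*}
and the analogous identity for $\varepsilon_d$ with $\langle K e_k,e_k\rangle = \lambda_k$. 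The inequality therefore reduces to the statement that $\sum_{k=1}^d \langle K\phi_k,\phi_k\rangle$ is maximized over orthonormal $d$-frames by the top-$d$ eigenvectors of $K$, which is the Ky Fan maximum principle for compact self-adjoint operators.

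The main obstacle is the careful bookkeeping around Mercer's theorem: verifying that the continuity and positive semi-definiteness of the covariance kernel put us in its hypotheses, and then leveraging the \emph{uniformity} of the resulting expansion on the diagonal to conclude the uniform-in-$t$ convergence claimed in (i); the algebraic identities and the Ky Fan step are then routine.
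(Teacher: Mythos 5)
The paper does not prove this theorem itself; it is quoted as a classical result with the reader directed to Ash and to Ghanem--Spanos for the proofs of parts (i) and (ii) respectively. Your argument is correct and is essentially the standard proof found in those references: Mercer's theorem on the diagonal for the uniform mean-square convergence in (i), and the trace identity plus the Ky Fan (equivalently, the variational) characterization of the top eigenvalues for the optimality in (ii).
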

\noindent Going forward we assume the order of the eigenvalues, eigenfunctions, and the stochastic coefficients is determined according to $\lambda_1 \geq \lambda_2 \geq \lambda_3, \ldots$. \\ \\
\noindent According to Ghanem and Spanos \cite{Ghanesh} the Karhunen-Lo\`{e}ve theorem was proposed independently by Karhunen \cite{Karhun}, Lo\`{e}ve \cite{love}, and Kac and Siegert \cite{Katchy}. Modern proofs of the first part of the theorem can be found in \cite{Ashy} and \cite{Ghanesh} and the second part -- the optimality of the truncated approximation -- is also proven in \cite{Ghanesh}. A concise and readable overview of this theory is given in \cite[Chapter 7.1]{Luo}.\\ \\
\noindent We see that although the KLT is quite general, it is best applied in practice when can determine the three components necessary for a Karhunen-Lo\'{e}ve expansion: the eigenfunctions $\{e_k\}_{k \geq 1}$; the eigenvalues $\{\lambda_k\}_{k \geq 1}$; and the distribution of the stochastic coefficients $\{Z_k\}_{k \geq 1}$. If we wish to use the KLE for simulation then we need even more: We also need to know how to simulate the random vector $Z^{(d)} = (Z_1,Z_2,\ldots,Z_d)$ which, in general, has uncorrelated but not necessarily independent components. \\ \\
For Gaussian processes, the second obstacle is removed, since one can show that the $\{Z_k\}_{k \geq 1}$ are again Gaussian, and therefore independent. There are, of course, many ways to simulate a vector of independent Gaussian random variables. For a process $X \in \mathcal{K}$, the matter is slightly more complicated as we establish in Theorem \ref{theo:main1}. However, since the covariance function of a process $X \in \mathcal{K}$ differs from that of a Wiener process only by the scaling factor $\alpha$, the method for determining the eigenfunctions and the eigenvalues for a L\'{e}vy process is identical to that employed for a Wiener process. Therefore, we omit the proof of the following proposition, and direct the reader to \cite[pg. 41]{Ashy} where the proof for the Wiener process is given.
\begin{proposition}\label{prop:joe}
The eigenvalues and associated eigenfunctions of the operator $K$ defined on $L^{2}([0,T])$ with respect to $X \in \mathcal{K}$ are given by
\begin{align}\label{eq:efuncval}
\lambda_k= \frac{\alpha T^2}{\pi^2\left(k - \frac{1}{2}\right)^2},\quad\text{ and }\quad  e_k(t)= \sqrt{\frac{2}{T}}\sin\left(\frac{\pi}{T}\left(k-\frac{1}{2}\right)t\right),\quad k\in \mathbb{N},\,t\in[0,T].
\end{align}
\end{proposition}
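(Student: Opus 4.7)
The plan is to reduce the eigenvalue problem to a classical Sturm--Liouville ODE, since the only feature of $X$ that enters the operator $K$ is the covariance function $\cov(X_s,X_t) = \alpha\min(s,t)$ established earlier for $X \in \mathcal{K}$. Because this kernel coincides, up to the scalar $\alpha$, with that of the standard Wiener process, I expect the eigenfunctions to be exactly the sines in \eqref{eq:wklt}, with eigenvalues rescaled by $\alpha$.

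First, I would write the eigenvalue equation $(Ke)(s) = \lambda e(s)$ explicitly as
\begin{align*}
\alpha\int_0^s t\,e(t)\,\d t + \alpha s\int_s^T e(t)\,\d t = \lambda e(s),\quad s \in [0,T].
\end{align*}
Setting $s = 0$ yields the boundary condition $e(0) = 0$. Differentiating once in $s$ (the boundary terms from the splitting cancel) gives
\begin{align*}
\alpha\int_s^T e(t)\,\d t = \lambda e'(s),
\end{align*}
from which setting $s = T$ gives the second boundary condition $e'(T) = 0$. Differentiating once more produces the ODE
\begin{align*}
\lambda e''(s) + \alpha e(s) = 0,\quad s \in [0,T].
\end{align*}

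Next I would solve the ODE. Writing $\omega = \sqrt{\alpha/\lambda}$ (noting $\lambda > 0$ since $K$ is the covariance operator of a nontrivial process), the general solution is $e(s) = A\sin(\omega s) + B\cos(\omega s)$. The condition $e(0)=0$ forces $B = 0$, and then $e'(T) = A\omega\cos(\omega T) = 0$ forces $\omega T = \pi(k - \tfrac12)$ for some $k \in \mathbb{N}$. This immediately gives $\lambda_k = \alpha T^2/\bigl(\pi^2(k-\tfrac12)^2\bigr)$, and normalizing $e_k(s) = A\sin(\pi(k-\tfrac12)s/T)$ via $\int_0^T\sin^2(\pi(k-\tfrac12)s/T)\,\d s = T/2$ yields $A = \sqrt{2/T}$, matching \eqref{eq:efuncval}.

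Finally, I would note that completeness of this family in $L^2([0,T])$ (so that no eigenvalues are missed) follows either from the classical Sturm--Liouville theory applied to the ODE together with the mixed Dirichlet/Neumann boundary conditions, or by direct reference to the Wiener case already worked out in \cite[pg. 41]{Ashy}. There is no real obstacle here beyond care with the boundary conditions; the content of the proposition is essentially that the covariance structure of $X \in \mathcal{K}$ is identical to that of a Wiener process up to the scalar $\alpha$, so all the work is the same Sturm--Liouville calculation.
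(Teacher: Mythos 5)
Your proof is correct and is precisely the Sturm--Liouville reduction that the paper itself omits, deferring instead to the Wiener-process computation in \cite[pg.~41]{Ashy}; since $\cov(X_s,X_t)=\alpha\min(s,t)$ for $X\in\mathcal{K}$, your calculation is that argument with the scalar $\alpha$ carried through. The boundary conditions, the ODE, the quantization $\omega T=\pi(k-\tfrac12)$, and the normalization all check out, so nothing further is needed.
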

\noindent A nice consequence of Proposition \ref{prop:joe} and Theorem \ref{theo:klt} is that it allows us to estimate the amount of total variance $v(T) := \int_0^T\var(X_t)\d t = \int_0^T\e[X^2_t]\d t = \alpha T^2/2$ we capture when we represent our process by a truncated KLE. Using the orthogonality of the $\{e_k\}_{k \geq 1}$, and the fact that $\e[Z_k^2] = \lambda_k$ for each $k$, it is straightforward to show that the total variance satisfies $v(T) = \sum_{k \geq 1}\lambda_k$. Therefore, the total variance explained by a $d$-term approximation is 
\begin{align*}
\frac{\sum_{k=1}^{d}\lambda_k}{v(T)} = \frac{2}{\pi^2}\sum_{k=1}^{d}\frac{1}{\left(k-\frac{1}{2}\right)^2}.
\end{align*}
By simply computing the quantity on the right we find that the first 2, 5 and 21 terms already explain $90\%$,  $95\%$, and $99\%$ of the total variance of the process. Additionally, we see that this estimate holds for all $X \in \mathcal{K}$ independently of $\alpha$ or $T$.\\ \\
\noindent The following lemma is the important first step in identifying the joint distribution of the stochastic coefficients of the KLE for $X \in \mathcal{K}$. The reader should note, however, that the lemma applies to more general L\'{e}vy processes, and is not just restricted to the set $\mathcal{K}$.
\begin{lemma}\label{lem:bert}
Let $X$ be a L\'evy process and let  $\{f_k\}_{k = 1}^d$ be a collection of functions which are in $L^1([0,T])$. Then the vector $\mathbf{\xi}$ consisting of elements
\begin{align*}
\xi_k = \int_0^{T}X_tf_k(s)\d s, \quad k \in \{1,2,\ldots, d\},
\end{align*}
has an ID distribution with characteristic exponent
\begin{align}\label{eq:bert}
\Psi_{\mathbf{\xi}}(\mathbf{z}) = \int_0^T\Psi_X\left(\langle \mathbf{z}, \mathbf{u}(t) \rangle\right)\d t,\quad \mathbf{z} \in \r^d,
\end{align}
where $\mathbf{u}:[0,T]\rightarrow \r^{d}$ is the function with $k$-th component $u_{k}(t) :=\int_t^T f_{k}(s)\d s$, $k \in \{1,2,\ldots,d\}$. 
\end{lemma}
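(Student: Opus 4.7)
The plan is to realize $\xi$ as a distributional limit of vectors that are manifestly ID by virtue of being linear combinations of independent Lévy increments, and to identify the limiting characteristic exponent as the Riemann sum limit of $\sum_l \Delta t_l\,\Psi_X(\langle \mathbf{z}, \mathbf{u}(t_l)\rangle)$.

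\noindent \textbf{Step 1: Abel summation.} Fix a partition $0 = t_0^{(n)} < t_1^{(n)} < \cdots < t_n^{(n)} = T$ with vanishing mesh, and define
\begin{align*}
\xi_k^{(n)} := \sum_{j=0}^{n-1} X_{t_j^{(n)}} \int_{t_j^{(n)}}^{t_{j+1}^{(n)}} f_k(s)\,\d s.
\end{align*}
Writing $X_{t_j^{(n)}} = \sum_{l=0}^{j-1}(X_{t_{l+1}^{(n)}} - X_{t_l^{(n)}})$ (using $X_0 = 0$) and swapping the order of summation, one obtains
\begin{align*}
\xi_k^{(n)} = \sum_{l=0}^{n-1} u_k(t_{l+1}^{(n)})\,\bigl(X_{t_{l+1}^{(n)}} - X_{t_l^{(n)}}\bigr),
\end{align*}
with $u_k$ as in the statement of the lemma. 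This is a deterministic linear combination of the independent, stationary increments of $X$.

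\noindent \textbf{Step 2: Characteristic function of the discretization.} Setting $\Delta_l^{(n)} := t_{l+1}^{(n)} - t_l^{(n)}$ and stacking coordinates into the vector $\mathbf{u}(t_{l+1}^{(n)})$, the increment $\mathbf{u}(t_{l+1}^{(n)})(X_{t_{l+1}^{(n)}}-X_{t_l^{(n)}})$ has the distribution of $\mathbf{u}(t_{l+1}^{(n)})\,X_{\Delta_l^{(n)}}$, hence characteristic exponent $\Delta_l^{(n)}\,\Psi_X(\langle \mathbf{z}, \mathbf{u}(t_{l+1}^{(n)})\rangle)$. Independence across $l$ then yields
\begin{align*}
\e\!\left[e^{\i \langle \mathbf{z}, \xi^{(n)}\rangle}\right] = \exp\!\left(-\sum_{l=0}^{n-1} \Delta_l^{(n)}\,\Psi_X\bigl(\langle \mathbf{z}, \mathbf{u}(t_{l+1}^{(n)})\rangle\bigr)\right),
\end{align*}
so each $\xi^{(n)}$ is ID with an explicit Lévy--Khintchine triple obtained by averaging the triple of $X$ along the image of $t \mapsto \mathbf{u}(t)$.

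\noindent \textbf{Step 3: Passage to the limit.} Since $X$ has càdlàg paths and is thus pathwise bounded on $[0,T]$ and $f_k \in L^1([0,T])$, dominated convergence (applied pathwise) gives $\xi_k^{(n)} \to \xi_k$ almost surely, and hence $\xi^{(n)} \to \xi$ in distribution. On the deterministic side, $\mathbf{u}$ is continuous on $[0,T]$ as a primitive of an $L^1$ function, and $\Psi_X$ is continuous, so $t \mapsto \Psi_X(\langle \mathbf{z}, \mathbf{u}(t)\rangle)$ is bounded and continuous, and the Riemann sums in Step~2 converge to $\int_0^T \Psi_X(\langle \mathbf{z}, \mathbf{u}(t)\rangle)\,\d t$. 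Lévy's continuity theorem now identifies the characteristic exponent of $\xi$ as claimed, and infinite divisibility of $\xi$ is inherited from the fact that the class of ID laws is closed under weak convergence.

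\noindent \textbf{Main obstacle.} The delicate point is the a.s. convergence $\xi_k^{(n)} \to \xi_k$ in Step~3 when $f_k$ is only assumed to be in $L^1$ rather than continuous: Riemann sums of $t \mapsto X_t f_k(t)$ against a general $L^1$ density need not converge pointwise. One resolves this by first approximating $f_k$ in $L^1$ by a continuous (or step) function $f_k^{(\varepsilon)}$, verifying the lemma for the smoother approximant (where Riemann convergence is automatic), and then letting $\varepsilon \to 0$: the $L^1$ error bound $\|f_k - f_k^{(\varepsilon)}\|_1$ controls both $\xi_k - \xi_k^{(\varepsilon)}$ (via path boundedness of $X$) and the corresponding change in $\mathbf{u}$ uniformly on $[0,T]$, so both sides of \eqref{eq:bert} vary continuously in $f_k$ and the limit is justified.
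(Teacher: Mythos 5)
Your proof is correct and follows the same core strategy as the paper's: discretize, telescope the process into a sum of independent stationary increments, regroup (your Abel summation is precisely the paper's ``telescoping sum technique''), read off the characteristic function of the resulting deterministic linear combination of increments, and pass to the limit. Two details differ, both mildly to your advantage. First, by weighting $X_{t_j^{(n)}}$ with the exact integral $\int_{t_j^{(n)}}^{t_{j+1}^{(n)}} f_k(s)\,\d s$ rather than with a point evaluation of $f_k$, you get $\xi_k^{(n)} = \int_0^T X_{\tau_n(s)} f_k(s)\,\d s$ with $\tau_n(s)$ the left partition point of the interval containing $s$; pathwise dominated convergence (using that $X$ is a.s.\ bounded on $[0,T]$ and that $X_{s-}=X_s$ off a countable set) then gives $\xi_k^{(n)} \to \xi_k$ a.s.\ directly for $f_k \in L^1$, so the hedge in your ``main obstacle'' paragraph is unnecessary for your chosen discretization --- the continuous-approximation fallback you sketch there is exactly the paper's two-stage argument ($C^1$ functions first, with an explicit Riemann-sum error bound, then density of $C^1$ in $L^1$). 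Second, you obtain infinite divisibility of $\xi$ from the closedness of the ID class under weak limits applied to the manifestly ID vectors $\xi^{(n)}$, whereas the paper runs a separate argument with the time-scaled processes $X^{(b)}_t := X_{bt}$ and Bochner's theorem to show that $e^{-b\Psi_{\xi}}$ is positive definite for every $b>0$; your route is shorter and more standard, and the paper itself invokes closedness under weak limits anyway in its $L^1$ extension step.
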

\begin{remark}
A similar identity to \eqref{eq:bert} is known, see pg. 128 in \cite{handbook}. In the proof of Lemma \ref{lem:bert}, we borrow some ideas from there. Since the proof is rather lengthy we relegate it to the Appendix.
\end{remark}
\noindent With Lemma \ref{lem:bert} and Proposition \ref{prop:joe} in hand, we come to our first main result. In the following theorem we identify the generating triple of the vector $Z^{(d)}$ containing the first $d$ stochastic coefficients of the KLE for a process $X \in \mathcal{K}$. Although it follows that $Z^{(d)}$ has dependent entries (see Corollary \ref{cor:main2}), Theorem \ref{theo:main1}, and in particular the form of the L\'{e}vy measure $\Pi$, will also be the key to simulating $Z^{(d)}$. Going forward we use the notation $\mathcal{B}_{S}$ for the Borel sigma algebra on the topological space $S$.
\begin{theorem}\label{theo:main1}
If $X \in \mathcal{K}$ with generating triple $(0,\sigma^2,\nu)_{h \equiv 1}$ then $Z^{(d)} \in \mathcal{C}$ with generating triple \\ $(\mathbf{0},\mathcal{Q},\Pi)_{h\equiv 1}$ where
$\mathcal{Q}$ is a diagonal $d\times d$ matrix with entries
\begin{align}\label{eq:themat}
q_{k,k} := \frac{\sigma^2}{2}\frac{T^2}{\pi^2\left(k - \frac{1}{2}\right)^2},\quad k \in \{1,2,\ldots,d\},
\end{align}
and $\Pi$ is the measure,
\begin{align}\label{eq:levymeas}
\Pi(B) := \int_{\r\backslash\{0\}\times[0,T]}\ind(f(\mathbf{v}) \in B)(\nu\times\lambda)(\d \mathbf{v}),\quad B \in \mathbb{B}_{\rdz},
\end{align} 
where $\lambda$ is the Lebesgue measure on $[0,T]$ and $f:\r\times[0,T] \rightarrow \r^d$ is the function
\begin{align}\label{eq:theff}
(x,t) \mapsto \frac{\sqrt{2T}x}{\pi}\left(
\frac{\cos\left(\frac{\pi}{T}\left(1 - \frac{1}{2}\right) t \right)}{\left(1 - \frac{1}{2}\right)},\frac{\cos\left(\frac{\pi}{T}\left(2 - \frac{1}{2}\right) t \right)}{\left(2 - \frac{1}{2}\right)},\ldots,\frac{\cos\left(\frac{\pi}{T}\left(d - \frac{1}{2}\right) t \right)}{\left(d - \frac{1}{2}\right)}\right)^{\textnormal{\textbf{T}}}.
\end{align}
\end{theorem}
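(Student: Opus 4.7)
The plan is to apply Lemma \ref{lem:bert} directly to the eigenfunctions from Proposition \ref{prop:joe} and then massage the resulting characteristic exponent into Lévy--Khintchine form. First, I would set $f_k = e_k$ for $k \in \{1,\ldots,d\}$ in Lemma \ref{lem:bert}. Since the eigenfunctions are bounded on $[0,T]$ they are certainly in $L^1([0,T])$, so the lemma immediately yields that $Z^{(d)}$ is infinitely divisible with characteristic exponent
$$\Psi_{Z^{(d)}}(\mathbf{z}) = \int_0^T \Psi_X\bigl(\langle \mathbf{z},\mathbf{u}(t)\rangle\bigr)\d t, \qquad u_k(t) = \int_t^T e_k(s)\d s.$$
A direct antidifferentiation using $\cos(\pi(k-\tfrac{1}{2}))=0$ shows $u_k(t) = \frac{\sqrt{2T}}{\pi(k-1/2)}\cos(\pi(k-\tfrac{1}{2})t/T)$, so $\langle \mathbf{z},\mathbf{u}(t)\rangle\,x = \langle \mathbf{z},f(x,t)\rangle$ with $f$ as in \eqref{eq:theff}.

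Next I would plug in the representation $\Psi_X(w) = \tfrac{\sigma^2 w^2}{2} - \int_{\rz}(e^{iwx}-1-iwx)\nu(\d x)$ (valid since $X\in\mathcal{K}$ admits a triple with $a=0$, $h\equiv 1$) and split the integral into a Gaussian and a jump part. For the Gaussian part I compute $\tfrac{\sigma^2}{2}\int_0^T \langle \mathbf{z},\mathbf{u}(t)\rangle^2\d t$ by expanding the inner product and invoking the orthogonality relations $\int_0^T \cos(\pi(k-\tfrac{1}{2})t/T)\cos(\pi(j-\tfrac{1}{2})t/T)\d t = (T/2)\delta_{k,j}$. Only diagonal terms survive, producing precisely $\tfrac{1}{2}\mathbf{z}^{\textbf{T}}\mathcal{Q}\mathbf{z}$ with $q_{k,k}$ as in \eqref{eq:themat} (modulo a routine factor-of-two bookkeeping). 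For the jump part I substitute $\langle \mathbf{z},\mathbf{u}(t)\rangle x = \langle \mathbf{z},f(x,t)\rangle$, exchange the order of integration by Fubini, and recognize the resulting integral against $\nu\times\lambda$ as the pushforward under $f$; by the change-of-variable formula this equals $\int_{\rdz}(e^{i\langle \mathbf{z},\mathbf{y}\rangle}-1-i\langle \mathbf{z},\mathbf{y}\rangle)\Pi(\d\mathbf{y})$ with $\Pi$ exactly as in \eqref{eq:levymeas}.

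To close the argument I have to verify (i) that $\Pi$ is a bona fide Lévy measure and (ii) that $Z^{(d)}\in\mathcal{C}$. For (i) it suffices to bound $\int_{\rdz}\vert \mathbf{y}\vert^2\Pi(\d\mathbf{y}) = \int_0^T\int_{\rz}\vert f(x,t)\vert^2\nu(\d x)\d t$; the inner integrand factors as a constant times $x^2$, and summing $\sum_{k\geq 1}(k-\tfrac{1}{2})^{-2} = \pi^2/2$ collapses the bound to a finite multiple of $\int x^2\nu(\d x)$, which is finite because Condition A holds for $X\in\mathcal{K}$. For (ii), $\e[Z_k]=0$ follows from Fubini and $\e[X_t]=0$, while $\e[Z_k^2]=\lambda_k<\infty$ is part of Theorem \ref{theo:klt}. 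Uniqueness of the generating triple then identifies $(\mathbf{0},\mathcal{Q},\Pi)_{h\equiv 1}$ as the triple claimed.

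The main obstacle I anticipate is not any single computation but the careful bookkeeping: justifying Fubini in the pushforward construction (which requires the integrability bound just described, applied also to the truncated integrand $|e^{i\langle \mathbf{z},f(x,t)\rangle}-1-i\langle \mathbf{z},f(x,t)\rangle|\lesssim |\mathbf{z}|^2|f(x,t)|^2$), and keeping the cut-off convention $h\equiv 1$ consistent throughout so that the linear compensation term is properly absorbed into the integrand rather than producing an extra drift. The core calculational step — the orthogonality of the $u_k$'s and the explicit description of $\Pi$ as a pushforward — is otherwise straightforward once the setup from Lemma \ref{lem:bert} is in place.
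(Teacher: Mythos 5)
Your proposal is correct and follows essentially the same route as the paper's own proof: apply Lemma \ref{lem:bert} with $f_k=e_k$, compute $u_k(t)=\sqrt{2T}\cos(\tfrac{\pi}{T}(k-\tfrac12)t)/(\pi(k-\tfrac12))$, use the orthogonality of the $u_k$ to diagonalize the Gaussian part, identify the jump part as an integral against the pushforward measure $\Pi$ via Fubini, and finish by checking \ref{eq:conditionA} for $\Pi$ and $\e[Z_k]=0$. The ``factor-of-two bookkeeping'' you flag is real but sits in the paper's stated formula \eqref{eq:themat} rather than in your argument: carrying out the computation as you describe, with the $\tfrac12$ already present in $\tfrac12\mathbf{z}^{\textbf{T}}\mathcal{Q}\mathbf{z}$ accounted for, yields $q_{k,k}=\sigma^2T^2/(\pi^2(k-\tfrac12)^2)$, which is also what consistency with $\e[Z_k^2]=\lambda_k=\Psi''(0)T^2/(\pi^2(k-\tfrac12)^2)$ requires in the pure Gaussian case.
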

\begin{proof}
We substitute the formula for the characteristic exponent (Formula \ref{eq:char} with $a=0$ and $h\equiv 1$)  and the eigenfunctions (Formula \ref{eq:efuncval}) into \eqref{eq:bert} and carry out the integration. Then \eqref{eq:themat} follows from the fact that 
\begin{align*}
u_k(t) = \int_t^Te_k(s)\d s = \sqrt{\frac{2}{T}}\int_t^T\sin\left(\frac{\pi}{T}\left(k - \frac{1}{2}\right)s\right)\d s = \sqrt{2T}\frac{\cos\left(\frac{\pi}{T}\left(k - \frac{1}{2}\right)t\right)}{\pi(k-\frac{1}{2})}, \quad k \in \mathbb{N}
\end{align*} 
and that the $\{u_k\}_{k \geq 1}$ are therefore also orthogonal on $[0,T]$. \\ \\
\noindent Next we note that $f$ is a continuous function from $\r\backslash\{0\}\times[0,T]$ to $\r^d$ and is therefore $\left(\mathcal{B}_{\r\backslash\{0\}\times[0,T]},\mathcal{B}_{\r^d\backslash\{\mathbf{0}\}}\right)$ measurable. Therefore, $\Pi$ is nothing other than the push forward measure obtained from $(\nu\times\lambda)$ and $f$; in particular, it is a well-defined measure on $\mathcal{B}_{\r^d\backslash\{\mathbf{0}\}}$. It is also a L\'{e}vy measure that satisfies \ref{eq:conditionA} since
\begin{align}\label{eq:prolev}
\int_{\vert \mathbf{x} \vert > 1}\vert \mathbf{x} \vert^2\Pi(d \mathbf{x}) \leq \int_{\r^{d}\backslash\{\mathbf{0}\}}\vert \mathbf{x}\vert^2\Pi(\d \mathbf{x})
= \frac{2T}{\pi^2}\int_0^T\left(\sum_{k=1}^du_k^2(t)\right)\d t\int_{\r\backslash\{0\}}x^2\nu(d x) < \infty,
\end{align}
where the final inequality follows from the fact that $X \in \mathcal{K}$. Applying Fubini's theorem and a change of variables, i.e.,
\begin{align*}
\int_0^T\int_{\r\backslash\{0\}}e^{\i x\langle \mathbf{z}, \mathbf{u}(t) \rangle} - 1 - \i x\langle \mathbf{z},\mathbf{u}(t) \rangle \nu(\d x)\d t &= \int_{\r\backslash\{0\}\times[0,T]}e^{\i \langle \mathbf{z}, f(\mathbf{v}) \rangle} - 1 - \i \langle \mathbf{z},f(\mathbf{\mathbf{v}}) \rangle (\nu\times\lambda)(\d \mathbf{v})\\  &= \int_{\r^d\backslash\{\mathbf{0}\}}e^{\i\langle \mathbf{z}, \mathbf{x} \rangle} - 1 - \i\langle \mathbf{z},\mathbf{x} \rangle \Pi(\d \mathbf{x}),
\end{align*}
 concludes the proof of infinite divisibility. Finally, noting that
 \begin{align*}
 \e[Z_k] = \e\left[\int_0^TX_te_k(t)\d t\right] = \int_0^T\e[X_t]e_k(t)\d t = 0,\quad k \in \{1,2,\ldots,d\},
 \end{align*}
 shows that $Z^{(d)} \in \mathcal{C}$.
\end{proof}
\begin{remark}
Note, that if we set $\sigma=1$, $\nu \equiv 0$, and $T=1$ we may easily recover the KLE of the Wiener process, i.e., \eqref{eq:wklt}, from Theorem \ref{theo:main1}.
\end{remark}
\noindent We gather some fairly obvious but important consequences of Theorem \ref{theo:main1} in the following corollary.
\begin{corollary}\label{cor:main1} Suppose $X \in \mathcal{K}$, then:
\begin{enumerate}[(i)]
\item $X \in \mathcal{KB}$ with generating triple $(a,\sigma^2,\nu)_{h\equiv 0}$ if, and only if, $Z^{(d)} \in \mathcal{CB}$ with generating triple $(\mathbf{a},\mathcal{Q},\Pi)_{h\equiv 0}$, where $\mathcal{Q}$ and $\Pi$ are as defined in \eqref{eq:themat} and \eqref{eq:levymeas} and $\mathbf{a}$ is the vector with entries
\begin{align}\label{eq:drift}
a_k := a\frac{(-1)^{k+1}\sqrt{2}T^{\frac{3}{2}}}{\pi^2\left(k - \frac{1}{2}\right)^2},\quad k \in \{1,2,\ldots,d\}.
\end{align}
\item $X$ has finite L\'{e}vy measure $\nu$ if, and only if, $Z^{(d)}$ has finite L\'{e}vy measure $\Pi$.
\end{enumerate}
\end{corollary}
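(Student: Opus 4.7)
The plan is to derive both claims from the explicit push-forward description of $\Pi$ in Theorem~\ref{theo:main1}. Writing $f(x,t) = x\,\mathbf{u}(t)$, and noting that the set $\{t \in [0,T] : \mathbf{u}(t) = \mathbf{0}\}$ has Lebesgue measure zero (in fact it consists of the single point $t = T$), the push-forward formula yields the change of variables
\begin{align*}
\int_{\rdz} g(\mathbf{x})\,\Pi(\d\mathbf{x}) = \int_{\rz}\int_0^T g(x\,\mathbf{u}(t))\,\d t\,\nu(\d x)
\end{align*}
for any non-negative Borel $g$.  I would set $M := \max_{t\in[0,T]}|\mathbf{u}(t)|$ and $C := \int_0^T|\mathbf{u}(t)|\,\d t$; both are positive and finite by the continuity and non-triviality of $\mathbf{u}$.

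For part~(i), Theorem~\ref{theo:main1} already places $Z^{(d)} \in \mathcal{C}$, so the task is to show that Condition~B passes between $\nu$ and $\Pi$ and then to read off the drift vector of the $h\equiv 0$ representation.  Applying the change of variables with $g(\mathbf{x}) = |\mathbf{x}|\ind(|\mathbf{x}|\leq 1)$ rewrites $\int_{|\mathbf{x}|\leq 1}|\mathbf{x}|\,\Pi(\d\mathbf{x})$ as $\int_{\rz}I(x)\,\nu(\d x)$ with $I(x) := \int_0^T|x||\mathbf{u}(t)|\ind(|x||\mathbf{u}(t)| \leq 1)\,\d t$.  I would then control $I$ by splitting at $|x| = 1/M$: when $|x|\leq 1/M$ the indicator is identically one and $I(x) = C|x|$, while for $|x| > 1/M$ the trivial bound $I(x) \leq T$ combines with the fact that any L\'evy measure is finite on $\{|x|>1/M\}$.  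A short case distinction according to whether $M\leq 1$ or $M > 1$ then shows that $\int_{\rz}I(x)\,\nu(\d x) < \infty$ if and only if $\int_{|x|\leq 1}|x|\,\nu(\d x) < \infty$.  Once Condition~B is established, the drift vector for the $h\equiv 0$ triple of $Z^{(d)}$ is determined by $a_k = -\int x_k\,\Pi(\d\mathbf{x})$; the same change of variables, together with $a = -\int x\,\nu(\d x)$, reduces this to $a_k = a\int_0^T u_k(t)\,\d t$, and the elementary trigonometric integral, evaluated via $\sin(\pi(k-\frac{1}{2})) = (-1)^{k+1}$, produces formula~\eqref{eq:drift}.

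Part~(ii) falls out of the same change of variables applied to $g \equiv 1$: since $\mathbf{u}(t) \neq \mathbf{0}$ for Lebesgue-almost every $t$, we get $\Pi(\rdz) = T\,\nu(\rz)$, so each side is finite exactly when the other is.  The only step that is not a routine calculation is the two-sided control of $I(x)$ needed for the Condition~B equivalence; the split at $|x| = 1/M$, combined with the standard fact that L\'evy measures are finite off every neighborhood of the origin, should make this transparent.
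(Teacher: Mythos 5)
Your argument is correct and rests on the same foundation as the paper's: both proofs exploit the push-forward structure of $\Pi$ from Theorem~\ref{theo:main1} to convert integrals against $\Pi$ into integrals against $\nu\times\lambda$, and your drift computation and your proof of (ii) (via $\Pi(\rdz)=T\,\nu(\rz)$, justified by the negligibility of the zero set of $\mathbf{u}$) match the paper's in substance. The one place where you take a genuinely different route is the equivalence of \ref{eq:conditionB}. The paper never touches the truncated integral $\int_{\vert\mathbf{x}\vert\leq 1}\vert\mathbf{x}\vert\,\Pi(\d\mathbf{x})$ directly: it observes that the \emph{full} first-moment integral factors as $\int_{\rdz}\vert\mathbf{x}\vert\,\Pi(\d\mathbf{x}) = \bigl(\int_0^T\vert\mathbf{u}(t)\vert\,\d t\bigr)\int_{\rz}\vert x\vert\,\nu(\d x)$, and then invokes \ref{eq:conditionA} (which holds for both measures by Theorem~\ref{theo:main1}) to note that \ref{eq:conditionB} is equivalent to finiteness of the full first moment, so the equivalence is immediate from the factorization. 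You instead work with the truncated integral itself, writing it as $\int I(x)\,\nu(\d x)$ and controlling $I$ by splitting at $\vert x\vert = 1/M$ and using finiteness of $\nu$ off neighborhoods of the origin. Your route is more hands-on and self-contained (it does not need \ref{eq:conditionA} at all for this step), at the cost of the case analysis in $M$; the paper's is shorter but leans on the prior verification of \ref{eq:conditionA} for $\Pi$ in \eqref{eq:prolev}. Both are complete proofs, and your version of the factorization (with the Euclidean norm $\vert\mathbf{u}(t)\vert$ appearing, rather than $\vert\sum_k u_k(t)\vert$) is in fact the cleaner statement of the identity the paper intends.
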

\begin{proof}\ \\
\emph{(i)} Since 
\begin{align*}
\int_{\rdz}\vert \mathbf{x} \vert \Pi(\d \mathbf{x}) = \frac{\sqrt{2T}}{\pi}\int_{0}^{T}\left\vert\sum_{k=1}^{d}u_k(t)\right\vert\d t\int_{\rz}\vert x \vert \nu(\d x)
\end{align*}
and \ref{eq:conditionA} is satisfied by both $\nu$ and $\Pi$ it follows that \ref{eq:conditionB} is satisfied for $\nu$ if, and only if, it is satisfied for $\Pi$. Formula \ref{eq:drift} then follows from the fact that
\begin{align*}
-\int_{\rdz}P_k(\bfx) \Pi(\d \bfx) = -\int_{\rz}x\nu(\d x)\frac{\sqrt{2T}}{\pi}\int_0^T\frac{\cos\left(\frac{\pi}{T}\left(k - \frac{1}{2}\right) t \right)}{\left(k - \frac{1}{2}\right)}\d t = a\frac{(-1)^{k+1}\sqrt{2}T^{\frac{3}{2}}}{\pi^2\left(k - \frac{1}{2}\right)^2}.
\end{align*}
\noindent \emph{(ii)} Straightforward from the definition of $\Pi$ in Theorem \ref{theo:main1}.
\end{proof}
\noindent Also intuitively obvious, but slightly more difficult to establish rigorously, is the fact that the entries of $Z^{(d)}$ are dependent unless $\nu \equiv 0$.
\begin{corollary}\label{cor:main2}
If $X \in \mathcal{K}$ then $Z^{(d)}$ has independent entries if, and only if, $\nu$ is the zero measure.
\end{corollary}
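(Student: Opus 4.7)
The plan is to prove the two directions separately. For the easy direction, if $\nu \equiv 0$ then by Theorem \ref{theo:main1} the generating triple of $Z^{(d)}$ reduces to $(\mathbf{0},\mathcal{Q},0)$, so $Z^{(d)}$ is a centered Gaussian vector with diagonal covariance $\mathcal{Q}$; since its components are jointly Gaussian and uncorrelated, they are mutually independent.

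For the converse, I would appeal to the classical characterization (going back to the uniqueness of the L\'evy--Khintchine representation applied to a product measure) that the components of an ID random vector in $\r^d$ are mutually independent if and only if its Gaussian covariance matrix is diagonal and its L\'evy measure is concentrated on the union of the coordinate axes
\begin{align*}
A := \bigl\{\mathbf{x} \in \rdz : \text{at most one coordinate of } \mathbf{x} \text{ is nonzero}\bigr\}.
\end{align*}
Since $\mathcal{Q}$ in \eqref{eq:themat} is always diagonal, independence of $Z^{(d)}$ is equivalent to $\Pi(\rdz \setminus A) = 0$.

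I would then exploit the pushforward structure of $\Pi$ given in \eqref{eq:levymeas}. For each $k\in\{1,\ldots,d\}$ the zero set $S_k := \{t\in[0,T] : \cos(\tfrac{\pi}{T}(k-\tfrac{1}{2})t)=0\}$ is finite and hence Lebesgue-null; setting $S := \bigcup_{k=1}^{d}S_k$, the map $f$ from \eqref{eq:theff} sends every point of $\rz \times ([0,T]\setminus S)$ to a vector with all $d$ coordinates nonzero, so in particular off $A$. Thus, for $d\geq 2$,
\begin{align*}
\Pi(\rdz \setminus A) = (\nu\times\lambda)\bigl(f^{-1}(\rdz\setminus A)\bigr) \geq (\nu\times\lambda)\bigl(\rz \times ([0,T]\setminus S)\bigr) = T\cdot\nu(\rz),
\end{align*}
which forces $\nu\equiv 0$.

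The main obstacle is invoking the characterization of independence for ID random vectors via their L\'evy measures; this is standard but requires some care to justify. Note also that the statement implicitly requires $d\geq 2$: for $d=1$ the independence condition is vacuous while $\nu$ need not vanish.
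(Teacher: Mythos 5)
Your proof is correct, and it rests on the same foundations as the paper's: the easy direction uses the diagonality of $\mathcal{Q}$ from Theorem \ref{theo:main1}, and the converse invokes the standard criterion (E 12.10 in \cite{sato}) that an ID vector has independent components precisely when its Gaussian covariance is diagonal and its L\'evy measure is carried by the union of the coordinate axes. Where you genuinely diverge is in how you show that $\Pi$ charges the complement of the axes. The paper constructs an explicit product of half-lines $\mathcal{I}^+(\mathbf{x})$ bounded away from the axes: it picks $\delta$ with $\nu((\delta,\infty))>0$, locates the smallest zero $t_d = T/(2d-1)$ of the last cosine factor, checks that all $d$ cosines are uniformly bounded below by some $\epsilon>0$ on $[0,t_d/2]$, and concludes $\Pi(\mathcal{I}^+(\mathbf{x})) \geq \nu((\delta,\infty))\,\lambda([0,t_d/2])>0$. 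You instead observe that the preimage under $f$ of the union of the axes is contained in $(\{0\}\times[0,T])\cup(\r\times S)$ with $S$ finite, hence Lebesgue-null, so that $\Pi(\rdz\setminus A) = T\,\nu(\rz)$ on the nose. Your route is shorter, avoids the $\delta$--$\epsilon$--$t_d$ bookkeeping, and yields the stronger conclusion that essentially \emph{all} of the mass of $\Pi$ lives off the axes rather than merely some of it; the paper's construction is more hands-on in that it exhibits a concrete open box of positive mass, but that extra information is not needed for the corollary. Your remark that the ``only if'' direction is vacuous for $d=1$ is also accurate and is implicitly assumed in the paper, whose proof restricts to $d\geq 2$ when analyzing the zeros of $h_d$.
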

\noindent To prove Corollary \ref{cor:main2} we use the fact that a $d$-dimensional ID random vector with generating triple $(\mathbf{a},Q,\nu)$ has independent entries if, and only if, $\nu$ is supported on the union of the coordinate axes and $Q$ is diagonal (see E 12.10 on page 67 in \cite{sato}). For this purpose we define, for a vector $\mathbf{x} = (x_1,x_2,\ldots,x_d)^{\textbf{T}} \in \r^d$ such that $\,x_k > 0,\,k\in\{1,2,\ldots,d\}$, the sets
\begin{align*}
\mathcal{I^+}(\mathbf{x}) := \Pi_{k=1}^{d}(x_k,\infty),\quad\text{and}\quad\mathcal{I^-}(\mathbf{x}) := \Pi_{k=1}^{d}(-\infty,-x_k),
\end{align*}
where we caution the reader that the symbol $\Pi$ indicates the Cartesian product and not the L\'{e}vy measure of $Z^{(d)}$. \\ \\
\noindent In the proof below, and throughout the remainder of the paper, $f$ will always refer to the function defined in \eqref{eq:theff}, and $f_k$ to the $k$-th coordinate of $f$.

\begin{proof}[Proof of Corollary \ref{cor:main2}]\ \\
\noindent ($\Leftarrow$) The assumption $\nu \equiv 0$ implies our process is a scaled Wiener process in which case it is well established that $Z^{(d)}$ has independent entries. Alternatively, this follows directly the fact that the matrix $\mathcal{Q}$ in Theorem \ref{theo:main1} is diagonal.\\ \\
\noindent ($\Rightarrow$) We assume that $\nu$ is not identically zero and show that there exists $\mathbf{x}$ such that either $\Pi(\mathcal{I}^+(\mathbf{x}))$ or $\Pi(\mathcal{I}^-(\mathbf{x}))$ is strictly greater than zero.\\ \\
\noindent Since $\nu(\rz) > 0$ there must exist $\delta > 0$ such that one of $\nu((-\infty,-\delta))$ and $\nu((\delta,\infty))$ is strictly greater than zero; we will initially assume the latter. We observe that for $d \in \mathbb{N}$, $d \geq 2$, the zeros of the function $h_d:[0,T]\rightarrow\r$ defined by
\begin{align*}
t \mapsto \frac{\cos\left(\frac{\pi}{T}\left(d - \frac{1}{2}\right) t \right)}{\left(d - \frac{1}{2}\right)},
\end{align*}
occur at points $\{nT/(2d-1)\}_{n=1}^{2d -1}$, and therefore the smallest zero is $t_d := T/(2d-1)$. From the fact that the cosine function is positive and decreasing on $[0,\pi/2]$ we may conclude that
\begin{align*}
\frac{\cos\left(\frac{\pi}{T}\left(k - \frac{1}{2}\right) t \right)}{\left(k - \frac{1}{2}\right)} > \epsilon,\quad k\in\{1,2,\ldots,d\},\quad t \in [0,t_d/2],
\end{align*}
where $\epsilon = h_d(t_d/2) > 0$. Now, let $\mathbf{x}$ be the vector with entries $x_k = \delta\epsilon\sqrt{2T}/\pi$ for $k \in \{1,2,\ldots,d\}$. Then,
\begin{align*}
(\delta,\infty) \times [0,t_d/2] \subset f^{-1}\left(\mathcal{I}^+\left(\mathbf{x}\right)\right),
\end{align*}
since for $(x,t)\in (\delta,\infty) \times [0,t_d/2]$ we have 
\begin{align*}
f_k(x,t) = \frac{\sqrt{2T}}{\pi}x\frac{\cos\left(\frac{\pi}{T}\left(k - \frac{1}{2}\right) t \right)}{\left(k - \frac{1}{2}\right)} > \delta\epsilon\frac{\sqrt{2T}}{\pi} = x_k,\quad k \in \{1,2,\ldots,d\}.
\end{align*}
But then,
\begin{align}
\Pi(\mathcal{I}^+(\mathbf{x})) \geq \nu((\delta,\infty))\times\lambda([0,t_d/2]) > 0.
\end{align}
If we had initially assumed that $\nu((-\infty,-\delta)) > 0$ we would have reached the same conclusion by using the interval $(-\infty,-\delta)$ and $\mathcal{I}^-(\mathbf{x})$. We conclude that $\Pi$ is not supported on the union of the coordinate axes, and so $Z^{(d)}$ does not have independent entries.
\end{proof}
\section{Shot-noise representation of $Z^{(d)}$}\label{sec:shotnoise}
Although we have characterized the distribution of our stochastic coefficients $Z^{(d)}$ we are faced with the problem of simulating a random vector with dependent entries with only the knowledge of the characteristic function. In general, this seems to be a difficult problem, even generating random variables from the characteristic function is not straightforward (see for example \cite{fromchar}). In our case, thanks to Theorem \ref{theo:main1} we know that $Z^{(d)}$ is infinitely divisible and that the L\'{e}vy measure $\Pi$ has a special disintegrated form. This will help us build the connection with the so-called shot-noise representation of our vector $Z^{(d)}$. The goal is to represent $Z^{(d)}$ as an almost surely convergent series of random vectors. \\ \\
\noindent To explain this theory -- nicely developed and explained in \cite{rosy,theoapp} -- we assume that we have two random sequences $\{V_i\}_{i \geq 1}$ and $\{\Gamma_i\}_{i \geq 1}$ which are independent of each other and defined on a common probability space. We assume that each ${\Gamma_i}$ is distributed like a sum of $i$ independent exponential random variables with mean 1, and that the $\{V_i\}_{i \geq 1}$ take values in a measurable space $D$, and are i.i.d. with  common distribution $F$. Further, we assume we have a measurable function $H:(0,\infty)\times D \rightarrow \r^{d}$ which we use to define the random sum
\begin{align}\label{eq:shot}
S_n:= \sum_{i = 1}^{n}H(\Gamma_i,V_i),\quad n \in \mathbb{N},
\end{align}
and the measure
\begin{align}\label{eq:meas}
\mu(B) := \int_0^{\infty}\int_{D}\ind(H(r,v) \in B)F(\d v)\d r,\quad B \in B_{\rdz}.
\end{align}
The function $C:(0,\infty)\rightarrow \r^{d}$ is defined by
\begin{align}\label{eq:funA}
C_k(s) := \int_0^{s}\int_DP_k(H(r,v))F(\d v)\d r,\quad k \in \{1,2,\ldots,d\},
\end{align}
where, as before, $P_k$ is the projection onto the $k$-th component.
The connection between \eqref{eq:shot} and ID random vectors is then explained in the following theorem whose results can be obtained by restricting Theorems 3.1, 3.2, and 3.4 in \cite{rosy} from a general Banach space setting to $\r^d$.
\begin{theorem}[Theorems 3.1, 3.2, and 3.4 in \cite{rosy}]\label{theo:ros}
Suppose $\mu$ is a L\'evy measure, then:
\begin{enumerate}[(i)]
\item If \ref{eq:conditionB} holds then $S_n$ converges almost surely to an ID random vector with generating triple $(\mathbf{0},\mathbf{0},\mu)_{h\equiv 0}$ as $n \rightarrow \infty$.
\item If \ref{eq:conditionA} holds, and for each $v \in S$ the function $r \rightarrow \vert H(r,v) \vert$ is non increasing, then 
\begin{align}\label{eq:compsum}
M_n := S_n - C(n), \quad n\in \mathbb{N}
\end{align}
converges almost surely to an ID random vector with generating triple $(\mathbf{0},\mathbf{0},\mu)_{h\equiv 1}$.
\end{enumerate}

\end{theorem}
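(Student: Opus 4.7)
The plan is to recognize the input data as a marked Poisson point process. The sequence $\{\Gamma_i\}_{i \geq 1}$ of partial sums of i.i.d.\ $\textnormal{Exp}(1)$ variables is exactly the arrival-time sequence of a unit-rate Poisson process on $(0,\infty)$, so the random measure $\sum_{i \geq 1}\delta_{(\Gamma_i,V_i)}$ is a PPP on $(0,\infty) \times D$ with intensity $\d r \times F(\d v)$. Pushing this PPP forward through $H$ yields, by the mapping theorem, a PPP on $\rdz$ whose intensity is exactly the measure $\mu$ defined in \eqref{eq:meas}. The partial sum $S_n$ is thus a discrete truncation (by index) of the Poisson integral of the identity against this image PPP, and the limit, once we can control convergence, should be identified as an ID random vector via the L\'evy--It\^o decomposition.

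For part (i), under \ref{eq:conditionB}, I would argue absolute convergence directly. Campbell's formula gives
\begin{align*}
\e\left[\sum_{i \geq 1}|H(\Gamma_i,V_i)|\,\ind(|H(\Gamma_i,V_i)| \leq 1)\right] = \int_{|\mathbf{x}| \leq 1}|\mathbf{x}|\,\mu(\d \mathbf{x}) < \infty,
\end{align*}
while \ref{eq:conditionA} (implicit in the hypothesis that $\mu$ is a L\'evy measure) ensures that almost surely only finitely many atoms satisfy $|H(\Gamma_i,V_i)| > 1$. Hence $S_n$ converges a.s.\ by absolute convergence, and its characteristic exponent is obtained from Campbell's exponential formula, matching \eqref{eq:char} with $\mathbf{a} = \mathbf{0}$, $Q = \mathbf{0}$, $\nu = \mu$, and $h \equiv 0$.

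For part (ii), absolute convergence typically fails, so I would introduce the continuous-parameter process $\widetilde S(s) := \sum_{i \geq 1} H(\Gamma_i,V_i)\,\ind(\Gamma_i \leq s)$ together with its natural filtration. Its compensator under the PPP is exactly $C(s)$, so $\widetilde S(s) - C(s)$ is a square-integrable martingale bounded in $L^2$ by
\begin{align*}
\sup_{s \geq 0}\e\!\left[\bigl|\widetilde S(s) - C(s)\bigr|^2\right] \leq \int_{\rdz}|\mathbf{x}|^2\,\mu(\d \mathbf{x}) < \infty
\end{align*}
by Campbell's second-moment identity together with \ref{eq:conditionA}. Doob's $L^2$ martingale convergence theorem then delivers an a.s.\ limit. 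The monotonicity hypothesis $r \mapsto |H(r,v)|$ non-increasing is precisely what forces the discrete-index sequence $M_n = S_n - C(n)$ to share this limit: it ensures that the tail contributions $H(\Gamma_i,V_i)$ for which $\Gamma_i$ sits in a neighbourhood of $n$ are suitably small, so that the index-based truncation $S_n$ and the level-based truncation $\widetilde S(n)$ differ by a vanishing error as $n \to \infty$. The characteristic exponent of the limit is again read off from Campbell's exponential formula, matching \eqref{eq:char} with $\mathbf{a} = \mathbf{0}$, $Q = \mathbf{0}$, $\nu = \mu$, and now $h \equiv 1$.

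The main obstacle is the rigorous coupling between the discrete truncation $S_n$ and the continuous-level truncation $\widetilde S(n)$ in part (ii); absent the monotonicity of $r \mapsto |H(r,v)|$ the series $\sum_i H(\Gamma_i,V_i)$ need not converge even after centering, so the hypothesis is not decorative but structural. Once that coupling is in place, everything else -- identification of the L\'evy measure via the mapping theorem, identification of the characteristic exponent via Campbell's exponential formula, and the martingale/absolute-convergence bookkeeping -- is standard Poisson integration.
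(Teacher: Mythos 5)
First, a point of reference: the paper does not prove this theorem --- it is imported (specialized from a Banach space to $\r^d$) from Rosi\'nski's survey, so there is no internal proof to compare against and your sketch must be judged as a reconstruction of the cited result. Your architecture --- view $\sum_{i\geq 1}\delta_{(\Gamma_i,V_i)}$ as a unit-rate marked Poisson point process, push it forward through $H$ to obtain a Poisson point process on $\rdz$ with intensity $\mu$, and read off the characteristic exponent from Campbell's exponential formula --- is exactly the point-process perspective of the cited source. Part (i) is essentially complete: under \ref{eq:conditionB} the small atoms are absolutely summable in expectation, and the large atoms are a.s.\ finite in number. For the latter you only need $\mu(\{\vert\mathbf{x}\vert>1\})<\infty$, which already follows from $\mu$ being a L\'evy measure; your parenthetical suggesting \ref{eq:conditionA} is implied by the L\'evy-measure hypothesis is backwards, but harmless at that step.

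Part (ii) has a genuine gap at precisely the point you flag. Since the $\Gamma_i$ are increasing, the index-truncated sum is exactly the level-truncated one evaluated at a random level, $S_n=\widetilde S(\Gamma_n)$, so $M_n=\bigl(\widetilde S(\Gamma_n)-C(\Gamma_n)\bigr)+\bigl(C(\Gamma_n)-C(n)\bigr)$. The first bracket converges a.s.\ by your $L^2$-martingale argument, but the second is asserted, not proved, and the obvious estimate does not close it: monotonicity together with $\int_0^\infty\int_D\vert H(r,v)\vert^2F(\d v)\,\d r<\infty$ gives only $\int_D\vert H(r,v)\vert^2F(\d v)=o(1/r)$, hence by Cauchy--Schwarz $\bigl\vert\int_DH(r,v)F(\d v)\bigr\vert=o(r^{-1/2})$, while $\vert\Gamma_n-n\vert$ is of order $\sqrt{n\log\log n}$ by the law of the iterated logarithm, so this route bounds $\vert C(\Gamma_n)-C(n)\vert$ only by a quantity of order $o(\sqrt{\log\log n})$, which need not vanish. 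Showing that the deterministic centers $C(n)$ may be substituted for the natural random ones $C(\Gamma_n)$ (equivalently, comparing $C(n)$ with $\e[S_n]$, which amounts to comparing $\ind(r\le n)$ with the probability that a Poisson variable of mean $r$ is at most $n-1$) is where the real content of Rosi\'nski's Theorem 3.4 lies, and it requires a finer use of the monotonicity hypothesis than the one you indicate. So: correct framework, part (i) sound, part (ii) incomplete at the very step you yourself identify as the main obstacle.
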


\noindent The name ``shot-noise representation" comes from the idea that $\vert H \vert$ can be interpreted as a model for the volume of the noise of a shot $V_i$ that occurred $\Gamma_i$ seconds ago. If $\vert H \vert $ is non increasing in the first variable, as we assume in case (ii) in Theorem \ref{theo:ros}, then the volume decreases as the elapsed time grows. The series $\lim_{n\rightarrow \infty}S_n$ can be interpreted as the total noise at the present time of all previous shots.\\ \\
\noindent The goal is to show that for any process in $\mathcal{K}$ whose L\'{e}vy measure admits a strictly positive density $\pi$, the vector $Z^{(d)}$ has a shot-noise representation of the form \eqref{eq:shot} or \eqref{eq:compsum}. To simplify notation we make some elementary but necessary observations/assumptions: First, we assume that $X$ has no Gaussian component $\sigma^2$. There is no loss of generality to this assumption, since if $X$ does have a Gaussian component then $Z^{(d)}$ changes by the addition of a vector of independent Gaussian random variables. This poses no issue from a simulation standpoint. Second, from \eqref{eq:char} we see that any L\'{e}vy process $X$ with representation $(0,0,\pi)_{h\equiv j}$, $j \in \{0,1\}$ can be decomposed into the difference of two independent L\'{e}vy processes, each having only positive jumps. Indeed, splitting the integral and making a change of variable $x \mapsto -x$ gives
\begin{align}\label{eq:split}
\Psi_{X}(z) &= -\int_{\rz} e^{\i zx} - 1 - \i zxj\pi(x)\d x \nonumber \\
	&= -\int_{0}^{\infty} e^{\i zx} - 1 - \i zxj\pi(x)\d x -\int_{0}^{\infty}e^{\i z(-x)} - 1 - \i z(-x)j\pi(-x)\d x \nonumber \\
	&= \Psi_{X^+}(z) + \Psi_{-X^-}(z)
\end{align}
where $X^+$ (resp. $X^-$) has L\'{e}vy density $\pi(\cdot)$ (resp. $\pi(-\cdot)$) restricted to $(0,\infty)$. In light of this observation, the results of Theorem \ref{theo:main2} are limited to L\'{e}vy processes with positive jumps. It should be understood that for a general process we can obtain $Z^{(d)}$ by simulating $Z^{(d)}_+$ and $Z^{(d)}_-$ -- corresponding to $X^+$ and $X^-$ respectively -- and then subtracting the second from the first to obtain a realization of $Z^{(d)}$.\\ \\
\noindent Last, for a L\'{e}vy process with positive jumps and strictly positive L\'{e}vy density $\pi$, we define the function
\begin{align}\label{eq:thefung}
g(x) := \int_x^{\infty}\pi(s)\d s.
\end{align}
which is just the tail integral of the L\'{e}vy measure. We see that $g$ is strictly monotonically decreasing to zero, and so admits a strictly monotonically decreasing inverse $g^{-1}$ on the domain $(0,g(0))$.
\begin{theorem}\label{theo:main2} Let $\pi$ be a strictly positive L\'{e}vy density on $(0,\infty)$ and identically zero elsewhere.
\begin{enumerate}[(i)]
\item If $X \in \mathcal{KB}$ with generating triple $(a,0,\pi)_{h\equiv 0}$, then $Z^{(d)}$ has a shot noise representation
\begin{align}\label{eq:shotnoise}
Z^{(d)} \edist \mathbf{a} + \sum_{i \geq 1}H(\Gamma_i,U_i)
\end{align}
where $f$ and $\mathbf{a}$ are defined in \eqref{eq:theff} and \eqref{eq:drift} respectively, $\{U_i\}_{i \geq 1}$ is an i.i.d. sequence of uniform random variables on $[0,1]$, and
\begin{align}\label{eq:H}
H(r,v) := f(g^{-1}(r/T)\ind( 0 < r < g(0)),Tv).
\end{align}
\item If $X \in \mathcal{K}$ with generating triple $(0,0,\pi)_{h\equiv 1}$, then $Z^{(d)}$ has a shot noise representation
\begin{align}\label{eq:const1}
Z^{(d)} \edist \lim_{n\rightarrow\infty}\sum_{i = 1}^{n}H(\Gamma_i,U_i) - C(n),
\end{align}
where $H$ and $\{U_i\}_{i\geq 1}$ are as in Part $(i)$ and $C$ is defined as in \eqref{eq:funA}.
\end{enumerate}
\end{theorem}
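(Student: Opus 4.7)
The plan is to reduce both parts to the shot-noise framework of Theorem \ref{theo:ros} by showing that the measure $\mu$ constructed from our choice of $H$ and the uniform distribution $F$ on $[0,1]$ coincides with the L\'evy measure $\Pi$ of $Z^{(d)}$ identified in Theorem \ref{theo:main1}. Once $\mu = \Pi$ is established, parts (i) and (ii) will follow directly from Theorem \ref{theo:ros}(i) and (ii), together with Corollary \ref{cor:main1}(i), noting that in both settings $X$ has no Gaussian part so the matrix $\mathcal{Q}$ in Theorem \ref{theo:main1} vanishes.

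The technical heart of the argument is a double change of variables in \eqref{eq:meas}. Starting from
\begin{align*}
\mu(B) = \int_0^{\infty}\int_0^1 \ind(H(r,v) \in B)\,\d v\,\d r
\end{align*}
with $H$ as in \eqref{eq:H}, I would first substitute $u = r/T$ and $y = Tv$, so that the factors of $T$ in the two Jacobians cancel. Because $f(0,y) = \mathbf{0}$ and $B \subset \rdz$, the indicator built into $H$ is equivalent to restricting the $u$-integration to $(0,g(0))$. The substitution $x = g^{-1}(u)$, using $g'(x) = -\pi(x)$, then transforms $\d u$ into $\pi(x)\,\d x$ and sends the $u$-range to $x \in (0,\infty)$, yielding
\begin{align*}
\mu(B) = \int_{(0,\infty)\times [0,T]} \ind(f(x,y) \in B)\, \pi(x)\,\d x\,\d y,
\end{align*}
which is exactly $\Pi(B)$ from \eqref{eq:levymeas} since $\nu(\d x) = \pi(x)\,\d x$ is supported on $(0,\infty)$.

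For part (i), $X \in \mathcal{KB}$ implies that $\Pi$ satisfies \ref{eq:conditionB}, and Corollary \ref{cor:main1}(i) says that $Z^{(d)}$ has triple $(\mathbf{a},\mathbf{0},\Pi)_{h\equiv 0}$. Theorem \ref{theo:ros}(i) then yields that $\sum_{i\geq 1}H(\Gamma_i,U_i)$ converges a.s.\ to an ID vector with triple $(\mathbf{0},\mathbf{0},\Pi)_{h\equiv 0}$, so adding the deterministic drift $\mathbf{a}$ produces a vector with the same distribution as $Z^{(d)}$. For part (ii), $\Pi$ satisfies only \ref{eq:conditionA}, so I would invoke Theorem \ref{theo:ros}(ii). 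The one item requiring separate verification is that $r \mapsto |H(r,v)|$ is non-increasing for each fixed $v$; this is immediate because $g^{-1}$ is strictly decreasing on its domain and the $v$-dependent prefactor in $|H(r,v)|$ does not depend on $r$. The theorem then delivers a.s.\ convergence of $S_n - C(n)$ to an ID vector with triple $(\mathbf{0},\mathbf{0},\Pi)_{h\equiv 1}$, which matches $Z^{(d)}$ by Theorem \ref{theo:main1}. I expect the main obstacle to be purely bookkeeping in the change-of-variables step; everything else is a direct citation of results already established in the paper.
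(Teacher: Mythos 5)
Your proposal is correct and follows essentially the same route as the paper: identify $\mu$ with $\Pi$ via the substitutions $t=Tv$ and $r=Tg(x)$ (you merely run the change of variables in the opposite direction, from $\mu$ to $\Pi$ rather than from $\Pi$ to the $H$-form), check measurability and the monotonicity of $r\mapsto\vert H(r,v)\vert$, and then cite Theorems \ref{theo:main1} and \ref{theo:ros} together with Corollary \ref{cor:main1}. No gaps.
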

\begin{proof}
Rewriting \eqref{eq:levymeas} to suit our assumptions and making a change of variables $t = Tv$ gives, for any $B \in \mathcal{B}_{\rdz}$
\begin{align*}
\Pi(B) &= \int_0^T\int_0^{\infty}\ind(f(x,t) \in B)\pi(x)\d x\d t = \int_0^1\int_0^{\infty}\ind(f(x,Tv) \in B)T\pi(x)\d x\d v.
\end{align*}
Making a further change of variables $r = Tg(x)$ gives
\begin{align*}
\Pi(B) &= \int_0^1\int_0^{g(0)}\ind(f(g^{-1}(r/T),Tv) \in B)\d r\d v.
\end{align*}
Since $0 \notin B$, so that $\ind(0 \in B) = 0$, we may conclude that
\begin{align*}
\Pi(B) = \int_0^{\infty}\int_{0}^1\ind(f(g^{-1}(r/T)\ind( 0 < r < g(0)),Tv) \in B)\d v\d r.
\end{align*}
From the definition of the function $f$ (Formula \ref{eq:theff}), and that of $g^{-1}$, it is clear that 
\begin{align}\label{eq:myh}
(r,v) \mapsto f(g^{-1}(r/T)\ind( 0 < r < g(0)),Tv)
\end{align}
is measurable and non increasing in absolute value for any fixed $v$. Therefore, we can identify \eqref{eq:myh} with the function $H$, the uniform distribution on $[0,1]$ with $F$, and $\Pi$ with $\mu$. The results then follow by applying the results of Theorems \ref{theo:main1} and \ref{theo:ros} and Corollary \ref{cor:main1}.
\end{proof}
\noindent Going forward we will write simply $H(r,v) = f(g^{-1}(r/T),Tv)$ where it is understood that $g^{-1}$ vanishes outside the interval $(0,g(0))$.
\subsubsection*{Discussion}
There are two fairly obvious difficulties with the series representations of Theorem \ref{theo:main2}. The first -- this a common problem for all series representations of ID random variables when the L\'evy measure is not finite -- is that we have to truncate the series when $g(0) =\infty$ (equivalently $\nu(\rz) = \infty$). Besides the fact that in these cases our method fails to be exact, computation time may become an issue if the series converge too slowly. The second issue is that $g^{-1}$ is generally not known in closed form. Thus, in order to apply the method we will need a function $g$ that is amenable to accurate and fast numerical inversion. In the survey \cite{rosy} Rosi{\'n}ski reviews several methods, which depend on various properties of the L\'evy measure (for example, absolute continuity with respect to a probability distribution), that avoid this inversion. In a subsequent paper \cite{rosinski2007tempering} he develops special methods for the family of tempered $\alpha$-stable distributions that also do not require inversion of the tail of the L\'evy measure. We have made no attempt to adapt these techniques here, as the fall outside of the scope of this paper. However, this seems to be a promising area for further research.\\ \\
A nice feature of simulating a $d$-dimensional KLE of a L\'{e}vy process $X \in \mathcal{K}$ via Theorem \ref{theo:main2} is that we may increase the dimension incrementally. That is, having simulated a path of the $d$-term KLE approximation of $X$, 
\begin{align}\label{eq:partsum}
S_{t}^{(d)} := \sum_{k=1}^{d}Z_ke_k(t),\quad t\in[0,T],
\end{align}
we may derive a path of $S^{(d+1)}$ directly from $S^{(d)}$ as opposed to starting a fresh simulation. We observe that a realization $z_k$ of $Z_k$ can be computed individually once we have the realizations $\{\gamma_i,u_i\}_{i\geq 1}$ of $\{\Gamma_i,\,U_i\}_{i\geq 1}$. Specifically,
\begin{align*}
z_k = a_k + \sum_{i \geq 1}\frac{\sqrt{2T}g^{-1}(\gamma_i/T)}{\pi}\frac{\cos\left(\pi\left(k - \frac{1}{2}\right)u_i\right)}{\left(k-\frac{1}{2}\right)},
\end{align*}
when \ref{eq:conditionB} holds, with an analogous expression when it does not. Thus, if $s^{(d)}_t$ is our realization of $S^{(d)}_t$ we get a realization of $S^{(d+1)}_t$ via $s^{(d+1)}_t = s^{(d)}_t + z_{d+1}e_{d+1}(t)$.\\ \\
\noindent It is also worthwhile to compare the series representations for L\'{e}vy processes found in \cite{rosy} and the proposed method. As an example, suppose we have a subordinator $X$ with a strictly positive L\'{e}vy density $\pi$. Then, it is also true that
\begin{align}\label{eq:regseries}
\{X_t:t\in[0,T]\} \edist \left\{\sum_{i\geq 1}g^{-1}(\Gamma_i/T)\ind(TU_i < t):t\in[0,T]\right\}.
\end{align}
The key difference between the approaches, is that the series in \eqref{eq:regseries} depends on $t$, whereas the series representation of $Z^{(d)}$ is independent of $t$. Therefore, in \eqref{eq:regseries} we have to recalculate the series for each $t$, adding those summands for which $U_iT < t$. Of course, the random variables $\{\Gamma_i, U_i\}_{i\geq 1}$ need to be generated only once. On the other hand, while we have to simulate $Z^{(d)}$ only once for all $t$, each summand requires the evaluation of $d$ cosine functions, and for each $t$ we have to evaluate $d$ sine functions when we form the KLE. However, since there is no more randomness once we have generated $Z^{(d)}$ the second computation can be done in advance.

\subsubsection*{Example} Consider the Variance Gamma (VG) process which was first introduced in \cite{initvg} and has since become a popular model in finance. The process can be constructed as the difference of two independent Gamma processes, i.e., processes with L\'{e}vy measures of the form
\begin{align}\label{eq:gammasub}
\nu(\d x) = c\frac{e^{-\rho x}}{x}\d x,\quad x > 0,
\end{align}
where $c,\,\rho > 0$. For this example we use a Gamma process $X^+$ with parameters $c=1$ and $\rho=1$ and subtract a Gamma process $X^-$ with parameters $c=1$ and $\rho = 2$ to yield a VG process $X$. Assuming no Gaussian component or additional linear drift, it can be shown (see Proposition 4.2 in \cite{contan}) that the characteristic exponent of $X$ is then
\begin{align*}
\Psi_X(z) = -\left(\int_{0}^{\infty}(e^{\i zx} -1)\frac{e^{-x}}{x}\d x + \int_0^{\infty}(e^{-\i zx} -1)\frac{e^{-2x}}{x}\d x \right) = \log\left(1-\i z\right) + \log\left(1 + \frac{\i z}{2}\right).
\end{align*}
We observe that $X^+,\,X^{-} \notin \mathcal{K}$ since
\begin{align*}
\e[X^+_t] = \i t\Psi'_{X^+}(0) = t \neq 0\quad\text{ and }\quad\e[X^-_t] = \i t\Psi'_{X^-}(0) = \frac{t}{2} \neq 0.
\end{align*}
However, this is not a problem, since we can always construct processes $\tilde{X}^+,\,\tilde{X}^- \in \mathcal{K}$ by subtracting $t$ and $t/2$ from $X^+$ and $X^-$ respectively. We then generate the KLE of $\tilde{X}^+$ and add back $t$ to the result, and apply the analogous procedure for $X^{-}$. This is true generally as well, i.e., for a square integrable L\'{e}vy process with expectation $\e[X_t] = \i t\Psi_X'(0) \neq 0$ we can always construct a process $\tilde{X} \in \mathcal{K}$ by simply subtracting the expectation $\i t\Psi_X'(0)$.\\ \\
\noindent From \eqref{eq:gammasub} we see that the function $g$ will have the form
\begin{align*}
g(x) = c\int_x^{\infty}\frac{e^{-\rho s}}{s}\d s = cE_1(\rho x), 
\end{align*}
where $E_1(x) := \int_x^{\infty}s^{-1}e^{-s}\d s$ is the exponential integral function. Therefore, 
\begin{align*}
g^{-1}(T^{-1}r) = \frac{1}{\rho}E_1^{-1}\left(\frac{r}{Tc}\right).
\end{align*}
There are many routines available to compute $E_1$; we choose a Fortran implementation to create a lookup table for $E^{-1}_1$ with domain $[6.226\times10^{-22},45.47]$. We discretize this domain into $200000$ unevenly spaced points, such that the distance between two adjacent points is no more than 0.00231. Then we use polynomial interpolation between points.\\ \\
\noindent When simulating $Z^{(d)}_+$ we truncate the series \eqref{eq:shotnoise} when $(Tc)^{-1}\Gamma_i > 45.47$; at this point we have $g^{-1}(T^{-1}\Gamma_i) < \rho^{-1}10^{-19}$. Using the fact that the $\{\Gamma_i\}_{i\geq 1}$ are nothing other than the waiting times of a Poisson process with intensity one, we estimate that we need to generate on average $45Tc$ random variables to simulate $Z^{(d)}_+$ and similarly for $Z^{(d)}_-$. We remark that for the chosen process both the decay and computation of $g^{-1}$ are manageable.\\ \\
\noindent  We simulate sample paths of $S^{(d)}$ for $d \in \{5,10,15,20,25,100,3000\}$ using the described approach. We also compute a Monte Carlo (MC) approximation of the expectation of $X$ by averaging over $10^6$ sample paths of the $d$-term approximation. Some sample paths and the results of the MC simulation are depicted in Figure \ref{fig1}, where the colors black, grey, red, green, blue, cyan, and magenta correspond to $d$ equal to 5, 10, 15, 20, 25, 100, and 3000 respectively.\\ \\
\noindent In Figure \ref{figa} we show the sample paths resulting from a simulation of $S^{(d)}$. We notice that the numerical results correspond with the discussion of Section \ref{sec:klt}: the large movements of the sample path are already captured by the 5-term approximation. We also notice peaks resulting from rapid oscillations before the bigger ``jumps" in the higher term approximations. This behaviour is magnified for the 3000-term approximation in Figure \ref{figb}. In classical Fourier analysis this is referred to as the Gibbs phenomenon; the solution in that setting is to replace the partial sums by Ces\`{a}ro sums. We can employ the same technique here, replacing $S^{(d)}$ with $C^{(d)}$, which is defined by
\begin{align*}
C_t^{(d)} :=\frac{1}{d}\sum_{k=1}^dS_t^{(k)}.
\end{align*}
It is relatively straightforward to show that $C^{(d)}$ converges to $X$ in the same manner as $S^{(d)}$ (as described in Theorem \ref{theo:klt} (i)). In Figure \ref{figc} we show the effect of replacing $S^{(d)}$ with $C^{(d)}$ on all sample paths, and in Figure \ref{figd} we show the $C^{(3000)}$ approximation -- now the Gibbs phenomenon is no longer apparent. \\ \\
In Figure \ref{fige} we show the MC simulation of $E[S^{(5)}_t]$ (black +) plotted together with $E[X_t] = t/2$ (green $\circ$). We see the 5-term KLE already gives a very good approximation. In Figure \ref{figf} we also show the errors $E[S^{(d)}_t] - E[X_t]$ for $d=5$ (black +), $d=25$ (blue $\circ$), and $d=3000$ (magenta $\square$). Again we have agreement with the discussion in Section \ref{sec:klt}: little is gained in our MC approximation of $E[X_t]$ by choosing a KLE with more than 25 terms. Recall that a KLE with 25 terms already captures more than 99\% of the total variance of the given process.

\begin{figure}
\centering
\subfloat[]
{\label{figa}\includegraphics[height =5.5cm]{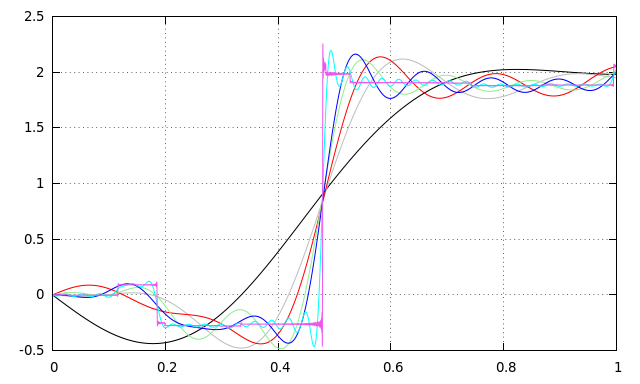}}
\subfloat[]
{\label{figb}\includegraphics[height =5.5cm]{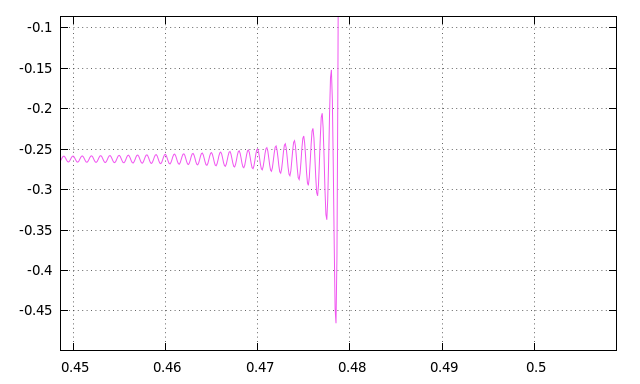}}\\
\subfloat[]
{\label{figc}\includegraphics[height =5.5cm]{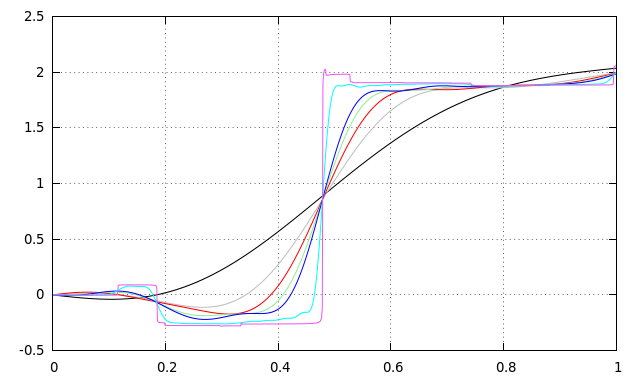}}
\subfloat[]
{\label{figd}\includegraphics[height =5.5cm]{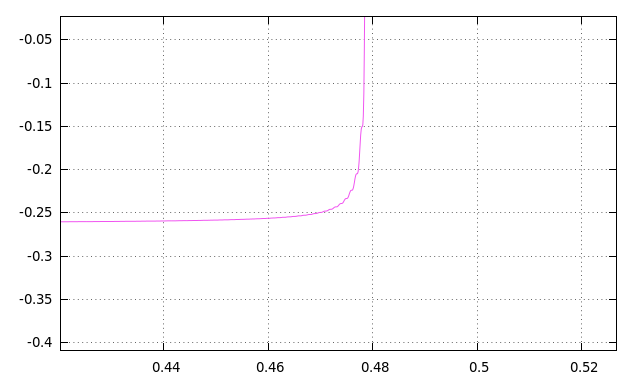}}\\
\subfloat[]
{\label{fige}\includegraphics[height =5.5cm]{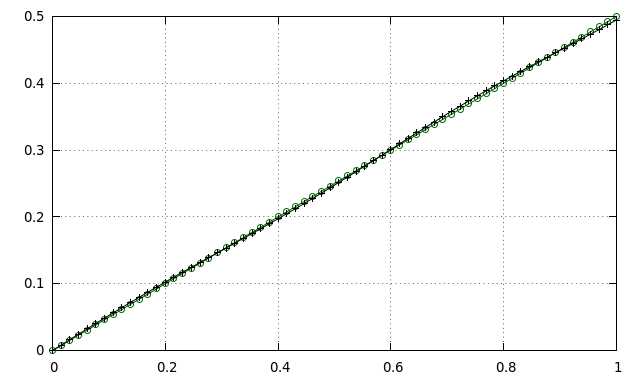}}
\subfloat[]
{\label{figf}\includegraphics[height =5.4cm]{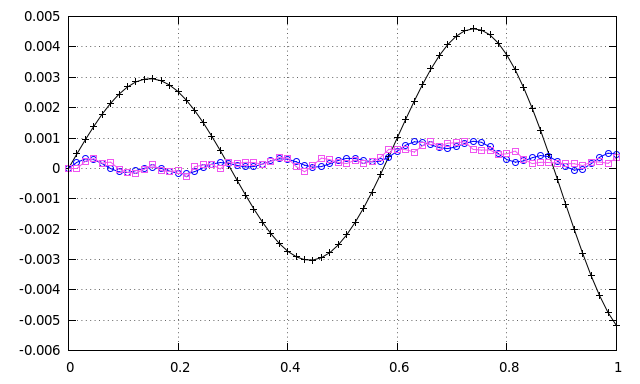}}   
\caption{(a) KLE sample paths  (b) Example of Gibbs phenomenon (c) KLE with Ces\`{a}ro sums \\(d) Mitigated Gibbs phen. (e) $\e[X_t]=t/2$ and MC sim. of $\e[S^{(5)}_t]$ (f) MC Err. $\e[S^{(d)}_t] - t/2$}
\label{fig1}
\end{figure}

\section*{Author's acknowledgements} My work is supported by the Austrian Science Fund (FWF) under the project F5508-N26, which is part of the Special Research Program ``Quasi-Monte Carlo Methods: Theory and Applications". I would like to thank Jean Bertoin for explaining his results in \cite{handbook} to me. This helped me extend identity \eqref{eq:bert} of Lemma \ref{lem:bert} from $C^1$ functions to $L^1$ functions. Further I would like to thank Alexey Kuznetsov and Gunther Leobacher for reading a draft of this paper and offering helpful suggestions.

\newpage
\begin{appendices}

\section{Additional proof}\label{app:A}
\begin{proof}[Proof of Lemma \ref{lem:bert}]
We give a proof for continuously differentiable $\{f_k\}_{k = 1}^d$ first and then prove the general case. Accordingly, we fix $\mathbf{z} \in \r^d$, a collection of continuously differentiable $\{f_k\}_{k = 1}^d$ defined on $[0,T]$, and a L\'{e}vy process $X$ with state space $\r$. Instead of proving identity \eqref{eq:bert} directly for $X$ we will prove that
\begin{align}\label{eq:withb}
\Psi_{\xi^{(b)}}(\mathbf{z}) = \int_0^T\Psi_{X^{(b)}}\left(\langle \mathbf{z}, \mathbf{u}(t) \rangle\right)\d t = b\int_0^T\Psi_X\left(\langle \mathbf{z}, \mathbf{u}(t) \rangle\right)\d t,\quad \mathbf{z} \in \r^d,\quad b > 0,
\end{align}
where $X^{(b)}$ is the process defined by $X^{(b)}_t := X_{bt}$ and $\xi^{(b)}$ is the vector with entries
\begin{align*}
\xi^{(b)}_k := \int_{0}^{T}X_{bt}f_k(t)\d t,\quad k \in \{1,2,\ldots, k\}.
\end{align*}
It is clear that $X^{(b)}$ is a L\'{e}vy process, that $\Psi_{X^{(b)}} = b\Psi_X$, and that \eqref{eq:bert} corresponds to the special case $b=1$. We focus on this more general result because it will lead directly to a proof of infinite divisibility. We begin by defining
\begin{align}\label{eq:arr}
{R^{(k)}_N} := \frac{T}{N}\sum_{n=0}^{N-1}f_k\left(\frac{(n+1)T}{N}\right)X_{\frac{b(n+1)T}{N}},\quad k \in \{1,2,\ldots, k\},\, N \in \mathbb{N},
\end{align}
which are $N$-point, right-endpoint Riemann sum approximations of the random variables $\xi^{(b)}_k$. By the usual telescoping sum technique for L\'evy processes we can write
\begin{align*}
X_{\frac{b(n+1)T}{N}} &= \left(X_{\frac{b(n+1)T}{N}} - X_{\frac{bnT}{N}}\right) + \left(X_{\frac{bnT}{N}} - X_{\frac{b(n-1)T}{N}}\right) + \ldots + \left(X_{\frac{b2T}{N}} - X_{\frac{bT}{N}}\right) + X_{\frac{bT}{N}} \\
&\edist X^{(1)} + X^{(2)} + \ldots +X^{(n+1)},
\end{align*}
where the random variables $X^{(i)}$ are independent and each distributed like $X_{bT/N}$. This allows us to rearrange the sum $R^{(k)}_N$ according to the random variables $X^{(i)}$, gathering together those with the same index. Therefore, we have
\begin{align*}
R^{(k)}_N \edist \sum_{n=0}^{N-1}X^{(n+1)}\left(\frac{T}{N}\sum_{j=n}^{N-1}f_k\left(\frac{(j+1)T}{N}\right)\right).
\end{align*}
We notice that the term in brackets on the right-hand side is a $(N - n)$-point, right-endpoint Riemann sum approximation for the integral of $f_k$ over the interval $[nT/N,T]$. Let us therefore define
\begin{align}\label{eq:esstee}
t^{(k)}_{n,N} := \frac{T}{N}\sum_{j=n}^{N-1}f_k\left(\frac{(j+1)T}{N}\right), \quad \text{ and } \quad s^{(k)}_{n,N} := \int_{\frac{nT}{N}}^{T}f_k(s)\d s,
\end{align}
as well as the $d$-dimensional vectors $\mathbf{t}_{n,N}$ and $\mathbf{s}_{n,N}$ consisting of entries $t^{(k)}_{n,N}$ and $s^{(k)}_{n,N}$ respectively. We observe that
\begin{align}\label{eq:convlev}
\e[\exp(\i\langle \mathbf{z}, \xi^{(b)}\rangle] =\lim_{N \rightarrow \infty}\e\left[\exp\left(\sum_{n=0}^{N-1}\i X^{(n+1)}\langle \mathbf{z}, \mathbf{t}_{n,N}\rangle\right)\right]
		   =\lim_{N \rightarrow \infty}\exp\left(-\frac{bT}{N}\sum_{n=0}^{N-1}\Psi_X(\langle \mathbf{z}, \mathbf{t}_{n,\,N}) \rangle\right),
\end{align}
where we have used the dominated convergence theorem to obtain the first equality, and the independence of the $X^{(i)}$ to obtain the final equality. Further, we get
\begin{align*}
\exp\left(-\int_0^T\Psi_{X^{(b)}}\left(\langle \mathbf{z}, \mathbf{u}(t) \rangle \right)\d t\right) = \lim_{N \rightarrow \infty}\exp\left(-\frac{bT}{N}\sum_{n=0}^{N-1}\Psi_X(\langle \mathbf{z}, \mathbf{s}_{n,\,N}) \rangle \right),
\end{align*}
by using the left-endpoint Riemann sums. We note that $\vert \langle \mathbf{z}, \mathbf{t}_{n,\,N} \rangle - \langle \mathbf{z}, \mathbf{s}_{n,\,N} \rangle \vert \rightarrow 0$ uniformly in $n$ since
\begin{align}\label{eq:bnd}
\vert \langle \mathbf{z}, \mathbf{t}_{n,\,N} \rangle -  \langle \mathbf{z}, \mathbf{s}_{n,\,N} \rangle\vert \leq \sum_{k=1}^{d}\vert z_k \vert\left \vert t^{(k)}_{n,N} - s^{(k)}_{n,N} \right \vert \leq \frac{d T^2}{N} \max_{1\leq k \leq d}\left\{\vert z_k\vert\sup_{x \in [0,T]}\vert f'_k(x) \vert \right\},
\end{align}
where the last estimate follows from the well-known error bound $((c-a)^2\sup_{x \in [a,c]}\vert g'(x) \vert)/N$ for the absolute difference between an $N$-point, right end-point Riemann sum and the integral of a $C^1$ function $g$ over $[a,c]$. Then, by the continuity of $\Psi_X$, for any $\epsilon > 0$ we may choose an appropriately large $N$ such that
\begin{align*}
\left \vert \frac{1}{N}\sum_{n=0}^{N-1}\psi_X(\langle \mathbf{z}, \mathbf{t}_{n,\,N} \rangle ) - \frac{1}{N}\sum_{n=0}^{N-1}\psi_X(\langle \mathbf{z}, \mathbf{s}_{n,\,N} \rangle )\right\vert &\leq \frac{1}{N}\sum_{n=0}^{N-1}\vert\psi_X(\langle \mathbf{z}, \mathbf{t}_{n,\,N} \rangle ) -  \psi_X(\langle \mathbf{z}, \mathbf{t}_{n,\,N} \rangle )\vert \leq \epsilon.
\end{align*}
This proves \eqref{eq:withb} and therefore also \eqref{eq:bert} for $C^{1}$ functions. \\ \\
To establish the infinite divisibility of $\xi$ we note that \eqref{eq:withb} shows that $\Psi_{\xi^{(b)}} = b\Psi_{\xi^{(1)}} = b\Psi_{\xi}$ and that $e^{-b\Psi_{\xi}}$ is therefore a positive definite function for every $b$ since it is the characteristic function of the random vector $\xi^{(b)}$. Positive definiteness follows from Bochner's Theorem (see for example Theorem 2.13 in \cite{levtype}). Also, we clearly have $\Psi_{\xi}(\mathbf{0}) = 0$ since $\Psi_{X}(0) = 0$. By Theorem 2.15 in \cite{levtype} these two points combined show that $\Psi_{\xi}$ is the characteristic exponent of an ID probability distribution, and hence $\xi$ is an ID random vector.\\ \\
Now one can extend the lemma to $L^{1}$ functions $\{f_k\}_{k = 1}^d$ by exploiting the density of $C^{1}([0,T])$ in $L^{1}([0,T])$. In particular, for each $k$ we can find a sequence of $C^{1}$ functions $\{f_{n,k}\}_{n\geq 1}$ which converges in $L^1$ to $f_k$. Then,
\begin{align*}
\vert u_k(t) - u_{n,k}(t) \vert = \left\vert \int_t^T f_k(t)\d t - \int_t^T f_{n,k}(t)\d t\right\vert \leq \int_0^T\left\vert f_k(t) - f_{n,k}(t)\right\vert\d t
\end{align*}
showing that $u_{n,k} \rightarrow u_k$ uniformly in $t$. This shows that for each $\mathbf{z}$ the functions $\{\Psi_{X}(\langle \mathbf{z},\mathbf{u}_n(\cdot)\rangle)\}_{n\geq 1}$, with $\mathbf{u}_n := (u_{n,1},\cdots,u_{n,d})^{\textnormal{\textbf{T}}}$, are uniformly bounded on $[0,T]$, so that the dominated convergence theorem applies and we have 
\begin{align}\label{eq:rhs}
\lim_{n \rightarrow \infty}\exp\left(-\int_0^T\Psi_X\left(\langle \mathbf{z}, \mathbf{u}_n(t) \rangle\right)\d t\right) = \exp\left(-\int_0^T\Psi_X\left(\langle \mathbf{z}, \mathbf{u}(t) \rangle\right)\d t\right).
\end{align}
On the other hand, $X$ is a.s. bounded on $[0,T]$, so that
\begin{align*}
\lim_{n\rightarrow\infty}\vert \xi_k - \xi_{n,k} \vert = \lim_{n\rightarrow\infty}\left\vert \int_0^{T}X_tf_{k}(t)\d t - \int_0^{T}X_t f_{n,k}(t)\d t \right \vert  \leq\left(\sup_{t\in[0,T]}\vert X_t \vert\right)  \lim_{n\rightarrow\infty}\int_0^{T}\vert f_{k}(t) -   f_{n,k}(t) \vert\d t = 0, 
\end{align*}
a.s.. Therefore $\Xi_n := (\xi_{n,1},\cdots,\xi_{n,d})^{\textnormal{\textbf{T}}}$ converges a.s. and consequently also in distribution to $\xi$. Together with \eqref{eq:rhs}, this implies that for each $\mathbf{z}$
\begin{align}\label{eq:final}
\lim_{n\rightarrow\infty}\e[e^{i\langle \mathbf{z},\Xi_n \rangle}] = \e[e^{i\langle \mathbf{z},\xi \rangle}] = \exp\left(-\int_0^T\Psi_X\left(\langle \mathbf{z}, \mathbf{u}(t) \rangle\right)\d t\right).
\end{align}
Therefore, \eqref{eq:bert} is also proven for functions in $L^1$. Since each $\Xi_n$ has an ID distribution Lemma 3.1.6 in \cite{Messer} guarantees that $\xi$ is also an ID random vector.
\end{proof}
\end{appendices}
\bibliographystyle{plain}
\bibliography{biblio}
\end{document}